\newtheorem{theorem}{Theorem}[section]
\newtheorem{lemma}[theorem]{Lemma}
\newtheorem{proposition}[theorem]{Proposition}
\newtheorem{corollary}[theorem]{Corollary}
\theoremstyle{definition}
\numberwithin{equation}{section}
\def\imod#1{\allowbreak\mkern5mu({\operator@font mod}\,\,#1)}
\begin{document}

\title[Real quadratic double sums]
{Real quadratic double sums} 
 
\author{Jeremy Lovejoy}

\author{Robert Osburn}

\address{CNRS, LIAFA, Universit{\'e} Denis Diderot - Paris 7, Case 7014, 75205 Paris Cedex 13, FRANCE}

\address{School of Mathematical Sciences, University College Dublin, Belfield, Dublin 4, Ireland}

\address{IH{\'E}S, Le Bois-Marie, 35, route de Chartres, F-91440 Bures-sur-Yvette, FRANCE}

\email{lovejoy@liafa.jussieu.fr}

\email{robert.osburn@ucd.ie, osburn@ihes.fr}

\subjclass[2010]{Primary: 33D15; Secondary: 11F37}
\keywords{Bailey pairs, real quadratic fields, $q$-series}

\date{\today}

\begin{abstract}
In 1988, Andrews, Dyson and Hickerson initiated the study of $q$-hypergeometric series whose coefficients are dictated by the arithmetic in real quadratic fields.  In this paper, we provide a dozen $q$-hypergeometric double sums which are generating functions for the number of ideals of a given norm in rings of integers of real quadratic fields and prove some related identities. 
\end{abstract}
 
\maketitle

\section{Introduction}
In 1988, Andrews, Dyson and Hickerson \cite{adh} initiated the study of $q$-hypergeometric series whose coefficients are dictated by the arithmetic in real quadratic fields.  They considered a $q$-series from Ramanujan's lost notebook,
\begin{equation} \label{sigmaofq}
\sigma(q) := \sum_{n \geq 0} \frac{q^{\binom{n+1}{2}}}{(-q)_n},
\end{equation}
and proved the Hecke-type identity, 
\begin{equation} \label{sigmaofqid}
\sigma(q) = \sum_{\substack{n \geq 0 \\ -n \leq j \leq n}} (-1)^{n+j}q^{n(3n+1)/2-j^2}(1-q^{2n+1}).
\end{equation}
\noindent Here and throughout we assume that $|q| < 1$ and use the standard $q$-hypergeometric notation,

\begin{equation*}
(a)_n = (a;q)_n = \prod_{k=1}^{n} (1-aq^{k-1}),
\end{equation*}

\noindent valid for $n \in \mathbb{N} \cup \{\infty\}$. 
\noindent Andrews, Dyson and Hickerson then used identity \eqref{sigmaofqid} to relate the coefficients of $\sigma(q)$ to the ring of integers of the real quadratic field $\mathbb{Q}(\sqrt{6})$.  As a consequence, they found that these coefficients satisfy an ``almost'' exact formula, are lacunary and yet, surprisingly, assume all integer values infinitely often.  

Other rare and intriguing examples of $q$-series related to real quadratic fields (predicted to exist by Dyson \cite{dyson}) have been investigated over the years (see \cite{Br-Ka1}, \cite{cflz}, \cite{reallove} and \cite{lovemall}, for example).  The key in each of these studies is the use of Bailey pairs to prove a Hecke-type identity resembling \eqref{sigmaofqid}.  We recall that a \emph{Bailey pair} relative to $a$ is a pair of sequences $(\alpha_n,\beta_n)_{n \geq 0}$ satisfying

\begin{equation} \label{pairdef}
\beta_n = \sum_{k=0}^n \frac{\alpha_k}{(q)_{n-k}(aq)_{n+k}}.
\end{equation} 

For example, Bringmann and Kane \cite{Br-Ka1} discovered the following two Bailey pairs. First, $(a_n,b_n)$ is a Bailey pair relative to $1$, where

\begin{equation} \label{a2n}
a_{2n} = (1-q^{4n})q^{2n^2-2n}\sum_{j=-n}^{n-1}q^{-2j^2-2j},
\end{equation}

\begin{equation} \label{a2n+1}
a_{2n+1} = -(1-q^{4n+2})q^{2n^2}\sum_{j=-n}^{n} q^{-2j^2},
\end{equation}

\noindent and

\begin{equation} \label{bn}
b_n = \frac{(-1)^n(q;q^2)_{n-1}}{(q)_{2n-1}} \chi(\text{$n \neq 0$}).
\end{equation}

\noindent Second, $(\alpha_n,\beta_n)$ is a Bailey pair relative to $q$, where

\begin{equation} \label{alpha2n}
\alpha_{2n} = \frac{1}{1-q}\left(q^{2n^2+2n}\sum_{j=-n}^{n-1}q^{-2j^2-2j} + q^{2n^2}\sum_{j=-n}^{n} q^{-2j^2}\right),
\end{equation}

\begin{equation} \label{alpha2n+1}
\alpha_{2n+1} = -\frac{1}{1-q}\left(q^{2n^2+4n+2}\sum_{j=-n}^{n} q^{-2j^2} + q^{2n^2+2n}\sum_{j=-n-1}^n q^{-2j^2-2j}\right),
\end{equation}

\noindent and 

\begin{equation} \label{betan}
\beta_n = \frac{(-1)^n(q;q^2)_n}{(q)_{2n+1}}.
\end{equation}

Recently, we showed that (\ref{a2n})--(\ref{betan}) are actually special cases of a much more general result (see Theorems 1.1--1.3 in \cite{Lo-Os}).  This led to new Bailey pairs involving indefinite quadratic forms, and we used these new pairs to find many new examples of $q$-hypergeometric double sums which are mock theta functions \cite{Lo-Os}.  In this paper we use these pairs to find many new examples of $q$-hypergeometric double sums which are generating functions for the number of ideals $\frak{a}$ of a given norm $N(\frak{a})$ in the rings of integers $\mathcal{O}_{K}$ of real quadratic fields $K$. Our main results are as follows.  We use the notation ${\sum}^{*}$ to indicate that the sum does not converge in the classical sense, but may be defined as the average of the even and odd partial sums.

 \begin{theorem} \label{quadthm1} Let $K=\mathbb{Q}(\sqrt{2})$. We have that

\begin{equation*} \label{l3}
L_{1}(q) := \sum_{n \geq 1} \sum_{n \geq k \geq 1}  \frac{(q)_{n-1}(-1)^{n+k}q^{\binom{n+1}{2} + \binom{k+1}{2}} }{(q)_{n-k} (q)_{k-1} (1-q^{2k-1})}
\end{equation*}

\noindent satisfies

\begin{equation} \label{L3result}
q^{-17} L_{1}(q^{32}) = \frac{1}{2} \sum_{\substack{\frak{a} \subset \mathcal{O}_{K} \\ N(\frak{a}) \equiv 15 \pmod{32}}} q^{N(\frak{a})},
\end{equation} 

\begin{equation*} \label{l4}
L_{2}(q) := \sum_{n \geq 0} \sum_{n \geq k \geq 0} \frac{(q)_{n}(-1)^{n+k}q^{\binom{n+1}{2} + \binom{k+1}{2}} }{(q)_{n-k} (q)_{k} (1-q^{2k+1})}
\end{equation*}

\noindent satisfies

\begin{equation} \label{L4result}
q^{7} L_{2}(q^{32}) = \frac{1}{2} \sum_{\substack{\frak{a} \subset \mathcal{O}_{K} \\ N(\frak{a}) \equiv 7 \pmod{32}}} q^{N(\frak{a})},
\end{equation}

\begin{equation*}
L_{3}(q) := q \sum_{n \geq 1} \sum_{n \geq k \geq 1}   \frac{(q)_{n-1}(-1)^{n+k}q^{\binom{n+1}{2} + \binom{k}{2}} }{(q)_{n-k} (q)_{k-1} (1-q^{2k-1})} 
\end{equation*}

\noindent satisfies

\begin{equation} \label{L5result}
q^{-33} L_{3}(q^{32}) = \frac{1}{2} \sum_{\substack{\frak{a} \subset \mathcal{O}_{K} \\ N(\frak{a}) \equiv 31 \pmod{32}}} q^{N(\frak{a})},
\end{equation}

\noindent and

\begin{equation*}
L_{4}(q) := -1 + \sum_{n \geq 0} \sum_{n \geq k \geq 0}  \frac{(q)_{n}(-1)^{n+k}q^{\binom{n+1}{2}+ \binom{k}{2}} }{(q)_{n-k} (q)_{k} (1-q^{2k+1})} 
\end{equation*}

\noindent satisfies

\begin{equation} \label{L6result}
q^{-9} L_{4}(q^{32}) = \frac{1}{2} \sum_{\substack{\frak{a} \subset \mathcal{O}_{K} \\ N(\frak{a}) \equiv 23 \pmod{32}}} q^{N(\frak{a})}.
\end{equation}

\end{theorem}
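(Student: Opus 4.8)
The plan is to run each of the four double sums through the Bailey machinery. First I would recognize that each $L_i(q)$ arises from iterating a Bailey pair: starting from one of the Bringmann--Kane pairs (\ref{a2n})--(\ref{betan}) (or, more precisely, from the more general pairs of \cite{Lo-Os}), I would insert the pair into a suitable limiting case of Bailey's lemma so that the $\beta_n$-side, after shifting the summation variable, produces the inner sum over $k$ with the $(1-q^{2k\pm1})$ denominator and the $(-1)^{n+k}q^{\binom{n+1}{2}+\binom{k}{2}}$ (or $\binom{k+1}{2}$) weight, while the $\alpha_n$-side produces a one-dimensional theta-like series built from the indefinite quadratic form $2n^2-2j^2$ (resp. $2n^2+2n-2j^2-2j$) appearing in (\ref{a2n})--(\ref{alpha2n+1}). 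Concretely, for $L_1$ and $L_3$ I expect to use the pair relative to $1$ and for $L_2$ and $L_4$ the pair relative to $q$, with the two choices of $\binom{k+1}{2}$ versus $\binom{k}{2}$ corresponding to whether one applies a single Bailey iteration or composes with the elementary $(aq)_n^{-1}\to$ shift; the exponent offsets $-17,7,-33,-9$ and the dilation $q\mapsto q^{32}$ are bookkeeping that falls out at the end.

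Second, having obtained a Hecke-type double-or-single sum identity of the shape $L_i(q)=\sum^{*}(\dots)$ resembling (\ref{sigmaofqid}), I would convert the indefinite quadratic form into an explicit sum over ideals. The standard route (as in \cite{adh} and the later papers \cite{Br-Ka1}, \cite{cflz}, \cite{reallove}, \cite{lovemall}) is: complete the Hecke-type series to a full two-variable sum over a lattice, identify the quadratic form $Q(n,j)$ governing the exponent with the norm form of $\mathcal{O}_K$ for $K=\mathbb{Q}(\sqrt2)$ (norm form $x^2-2y^2$, or an equivalent binary form of discriminant $8$), and match the sign $(-1)^{n+j}$ and the linear factor $(1-q^{2n+1})$ against the action of the fundamental unit $1+\sqrt2$ and the splitting behavior of rational primes. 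After passing to $q\mapsto q^{32}$ and extracting the appropriate residue class, the $(1-q^{2n+1})$ factor telescopes the orbit sum so that each ideal of norm $\equiv r\pmod{32}$ is counted exactly once (the $\tfrac12$ accounting for the two generators $\pm$ of a principal ideal, i.e.\ for $\pm$ symmetry in the lattice sum). I would verify the congruence condition on $N(\frak{a})$ by tracking which residues mod $32$ the form $Q$ represents after dilation, which is where the moduli $15,7,31,23$ come from.

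The main obstacle I anticipate is the second step — the passage from the Hecke-type lattice sum to the clean ideal-counting generating function with the correct congruence class and the correct constant $\tfrac12$. Getting the first step (the Bailey-pair manipulation yielding a Hecke-type identity for $L_i$) is essentially mechanical given Theorems 1.1--1.3 of \cite{Lo-Os}; but the arithmetic translation requires care: one must choose the right $GL_2(\mathbb{Z})$-change of variables to see the norm form, correctly handle the "extra" variable so that the double sum collapses to a genuine sum over ideals (not ideal pairs or lattice points), and confirm that the parity/sign data $(-1)^{n+j}(1-q^{2n+1})$ exactly cancels the overcounting from units and from the ambiguous forms. I would handle this uniformly for all four $L_i$ by isolating the common quadratic form and treating the four cases as four residue classes of a single dilated theta function, so that (\ref{L3result})--(\ref{L6result}) follow from one lemma on the representation of integers by $x^2-2y^2$ together with one computation of residues mod $32$. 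A secondary technical point is justifying the $\sum^{*}$ (average of even and odd partial sums) rigorously — this is standard for Hecke-type series and I would invoke the same regularization used in \cite{adh}, checking that the partial sums of the left- and right-hand sides agree term-by-term after the dilation.
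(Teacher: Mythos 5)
Your overall strategy --- feed a Bailey pair into a limiting form of Bailey's lemma to get a Hecke-type identity, then translate the indefinite quadratic form into an ideal count for $\mathbb{Q}(\sqrt2)$ --- is indeed the paper's strategy, and you correctly guessed that $L_1,L_3$ come from pairs relative to $1$ and $L_2,L_4$ from pairs relative to $q$. But there are two concrete gaps. First, the pairs you name, \eqref{a2n}--\eqref{betan}, carry the sums $\sum q^{-2j^2}$ and $\sum q^{-2j^2-2j}$; after the specializations available here these produce exponents of the shape $6n^2-2j^2+\cdots$, i.e.\ (up to sign) the norm form of $\mathbb{Q}(\sqrt3)$ --- they are exactly what the paper uses for Theorem \ref{quadthm2}, not Theorem \ref{quadthm1}. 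To land on $x^2-2y^2$ one needs the pairs of Propositions \ref{paircor2} and \ref{paircor3}, whose $\alpha$-sides carry $\sum q^{-4j^2-3j}$ and $\sum q^{-4j^2-j}$: inserting these into \eqref{alphaprimedef}--\eqref{betaprimedef} with $\rho_1,\rho_2\to\infty$ and applying \eqref{a=1} or \eqref{a=q} gives exponents such as $8n^2\pm n-4j^2-3j$, which after $q\mapsto q^{32}$ become $(16n\pm1)^2-2(8j+3)^2$ and the like. Your remark that "the dilation and offsets are bookkeeping that falls out at the end" hides exactly the computation that determines which quadratic field you are in.

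Second, your account of the arithmetic step does not match what actually happens and would not go through as described. The Hecke-type identities obtained here (e.g.\ \eqref{L1heck}) have \emph{no} signs and no residual $(1-q^{2n+1})$ factor left to "telescope against the unit": all terms are positive. The action of the fundamental unit is handled entirely by Lemma \ref{adhlemma}, which selects a unique representative $(u,v)$ with $u>0$ and $-\tfrac12 u<v\le\tfrac12 u$ in each orbit, so the lattice sum is literally a sum over ideals with no cancellation. The factor $\tfrac12$ is not about the two generators $\pm x$ of a principal ideal (these give the same ideal and lie in the same orbit); it arises because the single Hecke sum realizes only half of the sign classes $(u\bmod 16,\,v\bmod 8)$ compatible with $N(\frak{a})\equiv 15\pmod{32}$, and one doubles the sum via $j\to -j$ to cover the remaining classes before matching against the ideal count. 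Also, the $\sum^{*}$ regularization you flag is not needed anywhere in Theorem \ref{quadthm1} (it enters only for $L_7$, $L_8$, $L_{11}$, $L_{12}$). So the skeleton is right, but both load-bearing steps --- the choice of Bailey pair and the passage to ideals --- are either misidentified or left as acknowledged obstacles.
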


\begin{theorem} \label{quadthm2} Let $L=\mathbb{Q}(\sqrt{3})$. We have that

\begin{equation*} \label{l11}
L_{5}(q) := q \sum_{n \geq 1} \sum_{n \geq k \geq 1}  \frac{(-1)_n (q)_{n-1}(-1)^{n+k}q^{n+k^2-k} }{(q)_{n-k} (q^2; q^2)_{k-1} (1-q^{2k-1})}
\end{equation*}

\noindent satisfies

\begin{equation} \label{L11result}
q^{-2} L_{5}(q^{2}) = 2\sum_{\substack{\frak{a} \subset \mathcal{O}_{L} \\ N(\frak{a}) \equiv 0 \pmod{2} \\ \frak{a} = (x), N(x) < 0}} q^{N(\frak{a})},
\end{equation} 

\begin{equation*} \label{l15}
L_{6}(q) := \sum_{n \geq 1} \sum_{n \geq k \geq 1}  \frac{(-1)_n (q)_{n-1} (-1)^{n+k}q^{n+k^2} }{(q)_{n-k} (q^2;q^2)_{k-1}(1-q^{2k-1})}
\end{equation*}

\noindent satisfies

\begin{equation} \label{L15result}
q^{-1} L_{6}(q^{2}) = 2\sum_{\substack{\frak{a} \subset \mathcal{O}_{L} \\ N(\frak{a}) \equiv 1 \pmod{2} \\ \frak{a} = (x), N(x) < 0}} q^{N(\frak{a})},
\end{equation} 

\begin{equation*} \label{l16}
L_{7}(q) := 2 {\sum\limits_{n \geq 0}}^{*}\sum_{n \geq k \geq 0} \frac{(q^2; q^2)_{n}(-1)^{n+k} q^{k^2 + k}}{(q)_{n-k}(q^2;q^2)_k(1-q^{2k+1})} 
\end{equation*}

\noindent satisfies

\begin{equation} \label{L16result}
qL_{7}(q^{6}) = \sum_{\substack{\frak{a} \subset \mathcal{O}_{L} \\ N(\frak{a}) \equiv 1 \pmod{6}}} q^{N(\frak{a})},
\end{equation} 

\noindent and

\begin{equation*} \label{l17}
L_{8}(q) := -1 + 2 {\sum\limits_{n \geq 0}}^{*}\sum_{n \geq k \geq 0} \frac{(q^2; q^2)_{n}(-1)^{n+k} q^{k^2} }{(q)_{n-k} (q^2; q^2)_k (1-q^{2k+1})} 
\end{equation*}

\noindent satisfies

\begin{equation} \label{L17result}
q^{-2} L_{8}(q^{6}) = \sum_{\substack{\frak{a} \subset \mathcal{O}_{L} \\ N(\frak{a}) \equiv 4 \pmod{6}}} q^{N(\frak{a})}.
\end{equation}

\end{theorem}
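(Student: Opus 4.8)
The plan is to run the Bailey-pair machinery of \cite{Lo-Os} to convert each of $L_5,\dots,L_8$ into a Hecke-type indefinite double sum in the style of \eqref{sigmaofqid}, and then to identify that double sum with the ideal-counting generating function of $L=\mathbb{Q}(\sqrt{3})$ using the arithmetic of $L$: class number one, norm form $u^{2}-3v^{2}$, and fundamental unit $\varepsilon=2+\sqrt{3}$ of norm $+1$. The argument runs parallel to the proof of Theorem~\ref{quadthm1}.

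First I would recognize the inner $k$-sum of each $L_i$ as the $\beta_n$ (after $q\mapsto q$ or $q\mapsto q^{2}$) of one of the general Bailey pairs of \cite{Lo-Os}: the pairs relative to $1$, which generalize \eqref{a2n}--\eqref{bn}, for $L_5$ and $L_6$, and the pairs relative to $q$, which generalize \eqref{alpha2n}--\eqref{betan}, for $L_7$ and $L_8$; the companion $\alpha_n$ in each case is an explicit finite sum of monomials indexed by an indefinite binary quadratic form. Next I would feed these pairs into the appropriate limiting case of Bailey's lemma, choosing the parameter specialization that reproduces the prefactors visible in the $L_i$ — for $L_5$ and $L_6$ the one producing the factor $(-1;q)_n$, and for $L_7$ and $L_8$ the one whose summand does not tend to $0$, which is why those series are written with ${\sum}^{*}$. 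After the (routine) simplification of the $\alpha$-side, summation over the outer index collapses it to an indefinite double series of the form $\sum_{(m,j)\in C}(-1)^{\bullet}q^{Q(m,j)}\bigl(1-q^{\ell(m,j)}\bigr)$ for an explicit cone $C$ and indefinite form $Q$; the constant $-1$ in $L_8$ merely removes the $n=k=0$ term.

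It remains to identify this indefinite series arithmetically. Since $h(L)=1$ and the unit group is $\pm\varepsilon^{\mathbb{Z}}$ with $N(\varepsilon)=1$, every ideal $\mathfrak{a}$ is $(x)$ for some $x=u+v\sqrt{3}$, unique up to $\pm\varepsilon^{\mathbb{Z}}$, with $N(\mathfrak{a})=|u^{2}-3v^{2}|$; hence $\sum_{\mathfrak{a}}q^{N(\mathfrak{a})}$ is a sum over the lattice points $(u,v)$ in a fundamental domain for the $\langle\varepsilon\rangle$-action on $\{u^{2}-3v^{2}=\pm n\}$, and such a fundamental domain is a cone. I would then check that, after the dilations $q\mapsto q^{2}$ (for $L_5,L_6$) and $q\mapsto q^{6}$ (for $L_7,L_8$) and the monomial shifts in \eqref{L11result}--\eqref{L17result}, the cone $C$ and the signs coming out of Bailey's lemma match this fundamental-domain description: the telescoping factor $1-q^{\ell}$ and the sign $(-1)^{\bullet}$ encode the boundary of the domain and the identification $x\sim-x$, the constraint $N(x)<0$ in \eqref{L11result} and \eqref{L15result} picks out the part of $C$ surviving when only the sign $-$ of the norm contributes, and the prefactors $2$ versus $1$ record whether one or both signs $\pm$ of the norm are present. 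The congruence conditions $N(\mathfrak{a})\equiv 0,1\pmod 2$ and $N(\mathfrak{a})\equiv 1,4\pmod 6$ then read off from which monomials $q^{Q(m,j)}$ survive the substitutions.

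The main obstacle is exactly this last identification, together with the convergence bookkeeping for $L_7$ and $L_8$: one must split the cone $C$ into its two bounding rays, verify that their divergent contributions cancel in the average ${\sum}^{*}$, and only then extract the generating function — and in all four cases one must establish the bijection between the lattice points of $C$ (with the correct signs and multiplicities, modulo $\langle\varepsilon\rangle$) and the ideals of the prescribed norm class. By contrast, the Bailey-theoretic part — selecting the right pair from Theorems~1.1--1.3 of \cite{Lo-Os}, choosing the limiting case of Bailey's lemma, and simplifying the $\alpha$-side — is mechanical and follows the template already used for \eqref{sigmaofq}--\eqref{sigmaofqid}.
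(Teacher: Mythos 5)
Your proposal follows essentially the same route as the paper: recognize the inner sums as the $\beta_n$ of the Bailey pairs relative to $1$ (for $L_5$, $L_6$) and relative to $q$ (for $L_7$, $L_8$), iterate with $(\rho_1,\rho_2)=(\infty,\infty)$, apply the $\rho_1=-1$ and $\rho_1=q,\rho_2=-q$ limiting cases respectively (the latter forcing the ${\sum}^{*}$ convention), and then convert the resulting Hecke-type sums into ideal counts via Lemma~\ref{adhlemma} and unique factorization in $\mathcal{O}_L$, with the sign of the norm of a generator being well defined because the fundamental unit has norm $+1$. This matches the paper's proof in both structure and in all the key choices, so the plan is sound; what remains is only the explicit bookkeeping of the quadratic forms and summation cones that the paper carries out.
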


\begin{theorem} \label{quadthm3} Let $M=\mathbb{Q}(\sqrt{6})$. We have that

\begin{equation*} \label{l12}
L_{9}(q) := \sum_{n \geq 1}\sum_{n \geq k \geq 1}  \frac{(-1)_n (q)_{n-1}(-1)^{n+k}q^{n + \binom{k+1}{2}} }{(q)_{n-k} (q)_{k-1} (1-q^{2k-1})}
\end{equation*}

\noindent satisfies

\begin{equation} \label{L12result}
q^{-9} L_{9}(q^{16}) = \sum_{\substack{\frak{a} \subset \mathcal{O}_{M} \\ N(\frak{a}) \equiv 7 \pmod{16}}} q^{N(\frak{a})},
\end{equation} 

\begin{equation*} \label{l13}
L_{10}(q) := q \sum_{n \geq 1} \sum_{n \geq k \geq 1}  \frac{(-1)_n (q)_{n-1}(-1)^{n+k}q^{n + \binom{k}{2}} }{(q)_{n-k} (q)_{k-1} (1-q^{2k-1})}
\end{equation*}

\noindent satisfies

\begin{equation} \label{L13result}
q^{-17} L_{10}(q^{16}) = \sum_{\substack{\frak{a} \subset \mathcal{O}_{M} \\ N(\frak{a}) \equiv 15 \pmod{16}}} q^{N(\frak{a})},
\end{equation} 

\begin{equation*} \label{l18}
L_{11}(q) := 2 {\sum\limits_{n \geq 0}}^{*}\sum_{n \geq k \geq 0}  \frac{(q^2; q^2)_{n}(-1)^{n+k}q^{\binom{k+1}{2}} }{(q)_{n-k} (q)_{k} (1-q^{2k+1})}
\end{equation*}

\noindent satisfies

\begin{equation} \label{L18result}
q^{5} L_{11}(q^{48}) = \frac{1}{2} \sum_{\substack{\frak{a} \subset \mathcal{O}_{M} \\ N(\frak{a}) \equiv 5 \pmod{48}}} q^{N(\frak{a})},
\end{equation} 

\noindent and

\begin{equation*} \label{newL12}
L_{12}(q) := -2 + 2 {\sum\limits_{n \geq 0}}^{*}\sum_{n \geq k \geq 0}  \frac{(q^2; q^2)_{n}(-1)^{n+k}q^{\binom{k}{2}} }{(q)_{n-k} (q)_{k} (1-q^{2k+1})}
\end{equation*}

\noindent satisfies

\begin{equation} \label{newL12result}
q^{-19} L_{12}(q^{48}) = \frac{1}{2} \sum_{\substack{\frak{a} \subset \mathcal{O}_{M} \\ N(\frak{a}) \equiv 29 \pmod{48}}} q^{N(\frak{a})}.
\end{equation} 

\end{theorem}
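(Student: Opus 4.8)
The plan is to follow the two-step template that produced the Andrews--Dyson--Hickerson evaluation of $\sigma(q)$ and its descendants: first convert each double sum $L_{9},\dots,L_{12}$ into a Hecke-type double sum built from an indefinite binary quadratic form attached to $\mathbb{Q}(\sqrt{6})$, and then recognise that Hecke-type series, after the indicated dilation $q\mapsto q^{16}$ or $q\mapsto q^{48}$ and multiplication by a monomial, as the generating function for the integral ideals of $\mathcal{O}_{M}$, $M=\mathbb{Q}(\sqrt{6})$, lying in a fixed norm class.

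\emph{Step 1 (Bailey pairs $\Rightarrow$ Hecke-type double sums).} Each $L_{i}$ has the shape of a Bailey-type double sum: the inner sum over $k$ is a specialisation of one of the $\beta_{n}$'s from the general Bailey pairs of \cite{Lo-Os}, Theorems 1.1--1.3 --- for $L_{9},L_{10}$ the pair relative to $1$ whose $\beta_{n}$ carries a factor $1/\bigl((q)_{k-1}(1-q^{2k-1})\bigr)$, for $L_{11},L_{12}$ the pair relative to $q$ with a factor $1/\bigl((q)_{k}(1-q^{2k+1})\bigr)$ --- while the outer factors $(-1)_{n}(q)_{n-1}q^{n}$, respectively $(q^{2};q^{2})_{n}$, are exactly what is produced by inserting that pair into the appropriate limiting case of Bailey's lemma (sending one free parameter to $-1$, resp. $-q$, and the remaining ones to $\infty$). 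Carrying out that substitution replaces $\beta_{n}$ by the corresponding $\alpha_{n}$, which by \cite{Lo-Os} is an indefinite theta series in two variables; after simplification one obtains, for suitable signs $\epsilon_{i}(r,s)\in\{\pm1\}$, integers $a_{i}$, moduli $m_{i}\in\{16,48\}$, indefinite binary forms $Q_{i}$ attached to $\mathbb{Q}(\sqrt{6})$, and cones (wedges) $C_{i}\subset\mathbb{Z}^{2}$, identities of the form
\begin{equation*}
q^{a_{i}}L_{i}(q^{m_{i}}) \;=\; \sum_{(r,s)\in C_{i}} \epsilon_{i}(r,s)\,q^{Q_{i}(r,s)}.
\end{equation*}
I would record these four Hecke-type identities as separate lemmas, proved exactly as the analogous identities in \cite{Lo-Os}; one expects $L_{9},L_{10}$ in particular to be close cousins of the classical $\sigma$ and $\sigma^{*}$ of Andrews--Dyson--Hickerson.

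\emph{Step 2 (Hecke-type double sums $\Rightarrow$ ideal generating functions).} Here one uses the arithmetic of $M=\mathbb{Q}(\sqrt{6})$: $\mathcal{O}_{M}=\mathbb{Z}[\sqrt{6}]$ has discriminant $24$ and class number $1$, the fundamental unit is $\varepsilon=5+2\sqrt{6}$ with $N(\varepsilon)=+1$, and the number of integral ideals of norm $n$ equals $\sum_{d\mid n}\bigl(\tfrac{24}{d}\bigr)$ (Kronecker symbol). Since $h=1$, every ideal of norm $n$ is $(r+s\sqrt{6})$ with $|r^{2}-6s^{2}|=n$, two pairs giving the same ideal precisely when they differ by the action of $\pm\varepsilon^{\mathbb{Z}}$ (together with conjugation where relevant). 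One then exhibits a change of variables carrying $Q_{i}$ to $\pm N(\,\cdot\,)$ on $\mathcal{O}_{M}$ under which the cone $C_{i}$ becomes a transversal for that action on the set of elements whose norm lies in the relevant arithmetic progression; the right-hand side of the Step~1 identity thereby becomes $\sum_{\mathfrak{a}}q^{N(\mathfrak{a})}$ over ideals with $N(\mathfrak{a})$ in a fixed residue class modulo $m_{i}$, the factor $1-q^{2k\mp1}$ inside $L_{i}$ together with the sign $(-1)^{n+k}$ being exactly what isolates that single class, and the monomial $q^{a_{i}}$ normalising the exponents to be genuine norms. The factor $\tfrac12$ on the right of \eqref{L18result} and \eqref{newL12result}, and the starred summation ${\sum}^{*}$ in $L_{11},L_{12}$, arise respectively because the cone in those cases double-counts each ideal and because those two Hecke-type series do not converge classically and must be read as the average of even and odd partial sums (as for $\sigma(q)$); for $L_{9},L_{10}$ the cone is a strict fundamental domain and the series converges, so neither a $\tfrac12$ nor a star appears in \eqref{L12result} and \eqref{L13result}. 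That \emph{all} ideals of norm $\equiv 7,15\pmod{16}$ (resp. $\equiv 5,29\pmod{48}$) occur --- with no ``generator of negative norm'' restriction of the sort needed in Theorem \ref{quadthm2} --- is forced by genus theory: although the narrow class number of $M$ is $2$, membership in the residue class modulo $m_{i}$ already pins down the narrow class, so the unrestricted ideal count is the correct one.

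The main obstacle is Step~2: making the cone/fundamental-domain bookkeeping precise so that the sign pattern $(-1)^{n+k}$, the factor $1-q^{2k\pm1}$, the starred summation, and the multiplicities $1$ or $2$ all fit together to yield exactly the stated residue class and exactly the stated constant. Concretely, one must identify the correct linear substitution turning each $Q_{i}$ into $\pm N(\,\cdot\,)$ and verify that $C_{i}$ maps bijectively onto a transversal for the unit (and conjugation) action on the chosen progression. Once the forms $Q_{i}$ and cones $C_{i}$ are pinned down, the remainder --- the explicit Bailey-lemma specialisation of Step~1 and the norm-class verification of Step~2 --- is routine and runs parallel to the treatments of $\mathbb{Q}(\sqrt{2})$ and $\mathbb{Q}(\sqrt{3})$ in Theorems \ref{quadthm1} and \ref{quadthm2}.
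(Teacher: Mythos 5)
Your proposal follows essentially the same route as the paper: the double sums are produced by iterating the Bailey pairs of Propositions \ref{paircor2}--\ref{paircor3} (relative to $1$ for $L_9,L_{10}$ and to $q$ for $L_{11},L_{12}$) through the Bailey lemma with $\rho_1,\rho_2\to\infty$ and then specializing a parameter to $-1$ (resp.\ $-q$), and the resulting Hecke-type sums are matched to ideal counts in $\mathcal{O}_M$ via Lemma \ref{adhlemma} exactly as you describe. The one wrinkle your outline does not anticipate is that for $L_{11}$ the machinery naturally yields $q^{10}L_{11}(q^{96})=\frac12\sum_{N(\mathfrak{a})\equiv 10\ (\mathrm{mod}\ 96)}q^{N(\mathfrak{a})}$, and one must divide by the unique ideal $(2+\sqrt{6})$ of norm $2$ to reach the stated mod-$48$ identity \eqref{L18result}.
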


We note that it follows from the above Hecke-type identities and Theorem 1 in \cite{odoni} that all of the series $L_i(q)$ are lacunary.  Exact formulas for the number of elements/ideals in $\mathcal{O}_{K}$, $\mathcal{O}_{L}$ and $\mathcal{O}_{M}$ with prime power norm then imply that their coefficients assume all eligible integer values infinitely often.

With the following corollary we establish identities between some of the real quadratic double sums appearing in Theorems \ref{quadthm1} and \ref{quadthm2} and those which feature prominently in previous related works. Recall the following $q$-series (see (1.2) in \cite{cflz}, (1.9) in \cite{reallove} and Theorems 1.6 and 1.7 in \cite{Br-Ka1}):

\begin{align}
\mathcal{Z}_2(q) &:= \sum_{n \geq 1} \frac{q^n(-q^2;q^2)_{n-1}}{(-q;q^2)_n}, \nonumber \\ 
\mathcal{Z}_3(q) &:= \sum_{n \geq 1} \frac{(-1)^nq^{n^2+n}(q^2;q^2)_{n-1}}{(-q)_{2n}}, \nonumber \\
\mathcal{Z}_4(q) &:= \sum_{n \geq 0} \frac{(-1)^nq^{n^2+n}(q^2;q^2)_n}{(-q)_{2n+1}}, \nonumber \\
\mathcal{Z}_5(q) &:= \sum_{n \geq 1} \frac{(-1)^nq^n(q^2;q^2)_{n}}{(q^{n})_n}. \nonumber
\end{align}

\begin{corollary} \label{realidentities}
We have the following identities: 
\begin{align}
\mathcal{Z}_2(q) &= q^{-2}L_1(q^4) + qL_2(q^4) + q^{-4}L_3(q^4) + q^{-1}L_4(q^4), \label{realidentity1} \\
2\mathcal{Z}_3(q) &= -q^{-2}L_5(q^2) + q^{-1}L_6(q^2), \label{realidentity2} \\
\mathcal{Z}_4(-q)&= L_7(q^2) + q^{-1}L_8(q^2), \label{realidentity3} \\
-2\mathcal{Z}_5(q^2) &= L_6(q). \label{realidentity4}
\end{align}
\end{corollary}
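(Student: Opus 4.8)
The plan is to derive each of the four identities by expressing both sides in terms of the underlying Bailey pairs and then invoking the relevant instance of the Bailey machinery (Theorems 1.1--1.3 of \cite{Lo-Os}, together with the classical limiting case of Bailey's lemma). The key observation is that all of the $L_i(q)$ are, by construction, the $q$-hypergeometric double sums attached to certain indefinite-quadratic-form Bailey pairs relative to $1$ or $q$, while the $\mathcal{Z}_i(q)$ are the \emph{single} sums attached to the Bringmann--Kane pairs \eqref{a2n}--\eqref{betan} (or close cousins thereof). Thus each identity should amount to a statement of the form: a single Bailey-pair sum splits, upon separating the summation index into residue classes modulo $2$ (or modulo $4$), into a sum of the double-sum quantities $L_i$, with the indicated powers of $q$ and dilations $q \mapsto q^2$ or $q \mapsto q^4$ tracking exactly that splitting.

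Concretely, for \eqref{realidentity1} I would start from the series $\mathcal{Z}_2(q) = \sum_{n\geq 1} q^n(-q^2;q^2)_{n-1}/(-q;q^2)_n$, rewrite it via the Bailey pair $(a_n,b_n)$ of \eqref{a2n}--\eqref{bn} as a sum $\sum_{n} a_n \cdot(\text{something})$ coming from the $a\to 1$ specialization of Bailey's lemma, and then substitute the closed form for $a_n$; the inner finite sums $\sum_j q^{-2j^2\mp 2j}$ in \eqref{a2n}--\eqref{a2n+1} are precisely what, after reindexing, produce the second (the $k$-) summation in the $L_i$. Splitting $n$ according to its class mod $4$ (which is forced by the four terms on the right-hand side, each carrying a distinct shift $q^{-2},q,q^{-4},q^{-1}$ and the dilation $q\mapsto q^4$) and matching the quadratic exponents $\binom{n+1}{2}+\binom{k+1}{2}$, $\binom{n+1}{2}+\binom{k}{2}$, etc., against the exponent bookkeeping in the Bailey-pair expansion should yield the claim. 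Identities \eqref{realidentity2}, \eqref{realidentity3} use the companion pair $(\alpha_n,\beta_n)$ of \eqref{alpha2n}--\eqref{betan} relative to $q$ in the same way, now with the coarser splitting $n$ mod $2$; the sign changes (the $-q^{-2}L_5$ in \eqref{realidentity2}, the $\mathcal{Z}_4(-q)$ in \eqref{realidentity3}) come from the $(-1)^n$ in $\beta_n$ and from tracking $q\mapsto -q$ through the even/odd decomposition. For \eqref{realidentity4}, $-2\mathcal{Z}_5(q^2)=L_6(q)$, I would instead manipulate $\mathcal{Z}_5$ directly: using $(q^n)_n = (q)_{2n-1}/(q)_{n-1}$ one sees $\mathcal{Z}_5(q) = \sum_{n\geq 1}(-1)^n q^n (q^2;q^2)_n (q)_{n-1}/(q)_{2n-1}$, and comparing with the definition of $L_6$ (after $q\mapsto q^2$ and accounting for the factor $(-1)_n/(1-q^{2k-1})$ via a $k$-summation that telescopes) should give the result; alternatively one applies Theorem 1.3 of \cite{Lo-Os} to recognize both as the same Bailey-pair sum.

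The main obstacle will be the second step in each case: verifying that the inner $k$-sum in $L_i(q)$, which involves the factor $1/(q)_{k-1}(1-q^{2k-1})$ (or $1/(q)_k(1-q^{2k+1})$), collapses correctly against the finite Gaussian-type sums $\sum_j q^{-2j^2-2j}$ appearing in $a_n,\alpha_n$. This is a purely finite $q$-binomial identity for each fixed $n$, but getting the reindexing $j\leftrightarrow k$ right — including the boundary terms that account for the isolated constants $-1$ in $L_4,L_8$ and $-2$ in $L_{12}$, and the $\sum^*$ regularization on the non-convergent sums $L_7,L_8,L_{11},L_{12}$ — requires care. A secondary technical point is justifying the interchange/regularization of the divergent series: for the identities \eqref{realidentity2}--\eqref{realidentity4} everything converges, but one must check that the formal Bailey-pair manipulations are legitimate as identities of convergent $q$-series in $|q|<1$, which follows from the absolute convergence of all sums involved once the quadratic exponents are made explicit.
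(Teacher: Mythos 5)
Your proposal misses the actual (and much shorter) argument, and the substitute mechanism you describe would not work. The paper proves the corollary by a pure comparison of arithmetic identities: each $\mathcal{Z}_i$ is \emph{already known} to be a generating function for ideals with norm in a fixed residue class --- Theorem 3.3 of \cite{cflz} gives $q^{-1}\mathcal{Z}_2(q^8)=\tfrac12\sum q^{N(\frak{a})}$ over ideals of $\mathcal{O}_K$ with $N(\frak{a})\equiv 7\pmod 8$, and Theorem 1.2 of \cite{reallove} and Theorems 1.6, 1.7 of \cite{Br-Ka1} give the analogous statements for $\mathcal{Z}_3,\mathcal{Z}_5,\mathcal{Z}_4$ over $\mathcal{O}_L$ --- while Theorems \ref{quadthm1} and \ref{quadthm2} identify the $L_i$ as generating functions for the refinements of those classes (e.g.\ $N\equiv 7\pmod 8$ splits into $N\equiv 7,15,23,31\pmod{32}$, matching $L_2,L_1,L_4,L_3$ respectively, and $\mathcal{Z}_3$'s sum of $-(-1)^{N}q^{N}$ splits by the parity of $N$ into the $L_6$ and $L_5$ pieces, which produces the minus sign in \eqref{realidentity2}). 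Adding or subtracting the corresponding identities and rescaling $q$ gives \eqref{realidentity1}--\eqref{realidentity4} in a few lines. Your plan never invokes these known Hecke-type identities for the $\mathcal{Z}_i$, and that citation is the essential external input.

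The mechanism you propose instead is wrong in a concrete way. The four series $L_1,\dots,L_4$ are built from four \emph{different} Bailey pairs (Propositions \ref{paircor2} and \ref{paircor3}, two relative to $a=1$ and two relative to $a=q$, with different $\beta_n$); they are not the pieces obtained by splitting the summation index of one Bailey-pair sum into classes modulo $4$. The decomposition in \eqref{realidentity1} is governed by the residue modulo $32$ of the \emph{exponent} $N(\frak{a})=u^2-2v^2$, which is invisible at the level of the summation index $n$ of $\mathcal{Z}_2$; there is no reason for $n\bmod 4$ to sort terms by $N\bmod 32$, and it does not. The same objection applies to \eqref{realidentity2} and \eqref{realidentity3} ($L_5,L_6$ and $L_7,L_8$ come from distinct pairs, and the relevant split is of the norm, not of $n$), and to \eqref{realidentity4}, where no telescoping collapses the double sum $L_6(q)$ to the single sum $-2\mathcal{Z}_5(q^2)$: the paper's proof of that case is exactly the observation that the ideal sums on the right-hand sides of \eqref{realid4} and \eqref{L15result} coincide. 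If you insist on avoiding the earlier papers you would first have to reprove their Hecke-type identities, and even then the matching of residue classes is an arithmetic statement about norms in $\mathbb{Q}(\sqrt{2})$ and $\mathbb{Q}(\sqrt{3})$ (via Lemma \ref{adhlemma} and unique factorization), not a formal $q$-series manipulation.
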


The paper is organized as follows. In Section 2, we first recall some preliminaries on Bailey pairs and key results from \cite{adh} and \cite{Lo-Os}.  In Section 3, we prove Theorems \ref{quadthm1}--\ref{quadthm3} and Corollary \ref{realidentities}. In Section 4, we mention some questions for further study.

\section{Preliminaries}

Before proceeding to the proofs of Theorems \ref{quadthm1}--\ref{quadthm3}, we briefly discuss some preliminaries.  First, the Bailey lemma (see Chapter 3 in \cite{An2}) says that if $(\alpha_n,\beta_n)$  is a Bailey pair relative to $a$, then so is $(\alpha_n',\beta_n')$, where 

\begin{equation} \label{alphaprimedef}
\alpha'_n = \frac{(\rho_1)_n(\rho_2)_n(aq/\rho_1 \rho_2)^n}{(aq/\rho_1)_n(aq/\rho_2)_n}\alpha_n
\end{equation} 

\noindent and

\begin{equation} \label{betaprimedef}
\beta'_n = \sum_{k=0}^n\frac{(\rho_1)_k(\rho_2)_k(aq/\rho_1 \rho_2)_{n-k} (aq/\rho_1 \rho_2)^k}{(aq/\rho_1)_n(aq/\rho_2)_n(q)_{n-k}} \beta_k.
\end{equation}

\noindent The limiting form of the Bailey lemma is found by putting \eqref{alphaprimedef} and \eqref{betaprimedef} into \eqref{pairdef} and letting $n \to \infty$, giving

\begin{equation} \label{limitBailey}
\sum_{n \geq 0} (\rho_1)_n(\rho_2)_n (aq/\rho_1 \rho_2)^n \beta_n = \frac{(aq/\rho_1)_{\infty}(aq/\rho_2)_{\infty}}{(aq)_{\infty}(aq/\rho_1 \rho_2)_{\infty}} \sum_{n \geq 0} \frac{(\rho_1)_n(\rho_2)_n(aq/\rho_1 \rho_2)^n }{(aq/\rho_1)_n(aq/\rho_2)_n}\alpha_n.
\end{equation}

Next, we recall six key Bailey pairs which were established in \cite{Lo-Os}.

\begin{proposition} \label{paircor1} We have the following two Bailey pairs. First, the sequences $(a_n,b_n)$ form a Bailey pair relative to $1$, where

\begin{equation} \label{aevenslater1}
a_{2n} = (1-q^{4n})q^{2n^2-2n+1}\sum_{j=-n}^{n-1}q^{-2j^2},
\end{equation}

\begin{equation} \label{aoddslater1}
a_{2n+1} = -(1-q^{4n+2})q^{2n^2}\sum_{j=-n}^{n}q^{-2j^2-2j},
\end{equation}

\noindent and

\begin{equation} \label{bslater1}
b_n = 
\begin{cases}
0, &\text{if $n=0$},\\
\frac{(-1)^nq^{-n+1}}{(q^2;q^2)_{n-1}(1-q^{2n-1})}, &\text{otherwise}.
\end{cases}
\end{equation}

\noindent Second, the sequences $(\alpha_n,\beta_n)$ form a Bailey pair relative to $q$, where

\begin{equation} \label{aevenslater1q}
\alpha_{2n} = \frac{1}{1-q}\left(q^{2n^2}\sum_{j=-n}^{n}q^{-2j^2-2j} + q^{2n^2+2n+1}\sum_{j=-n}^{n-1}q^{-2j^2}\right), 
\end{equation}

\begin{equation} \label{aoddslater1q}
\alpha_{2n+1} = -\frac{1}{1-q}\left(q^{2n^2+2n+1}\sum_{j=-n-1}^{n}q^{-2j^2} + q^{2n^2+4n+2}\sum_{j=-n}^{n}q^{-2j^2-2j}\right),
\end{equation}

\noindent and

\begin{equation} \label{bslater1q}
\beta_n = \frac{(-1)^nq^{-n}}{(q^2;q^2)_{n}(1-q^{2n+1})}.
\end{equation}

\end{proposition}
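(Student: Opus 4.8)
We indicate a proof of the first Bailey pair in Proposition \ref{paircor1}; the second is obtained in the same way, with $a=q$ in place of $a=1$, starting from \eqref{bslater1q} and targeting \eqref{aevenslater1q}--\eqref{aoddslater1q}. The plan is to treat the sequence $b_n$ in \eqref{bslater1} as given and to recover the companion $\alpha$-sequence by inverting \eqref{pairdef}. Recall that $(\alpha_n,\beta_n)$ is a Bailey pair relative to $a$ if and only if
\[
\alpha_n=(1-aq^{2n})\sum_{j=0}^{n}\frac{(aq)_{n+j-1}}{(q)_{n-j}}(-1)^{n-j}q^{\binom{n-j}{2}}\,\beta_j .
\]
Hence it suffices to substitute \eqref{bslater1} into the right-hand side with $a=1$ and to check that the result equals \eqref{aevenslater1} when $n$ is even and \eqref{aoddslater1} when $n$ is odd.

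First I would use $b_0=0$ to restrict the sum to $1\le j\le n$, combine the signs via $(-1)^{n-j}(-1)^{j}=(-1)^{n}$, and collapse $(q)_{n+j-1}/[(q)_{n-j}(q^2;q^2)_{j-1}(1-q^{2j-1})]$ into a single ratio of $q$-Pochhammer symbols, using that $(q)_{2n-1}=(q;q^2)_{n}(q^2;q^2)_{n-1}$ (with the analogous splitting $(q)_{2n+1}=(q;q^2)_{n+1}(q^2;q^2)_{n}$ for the pair relative to $q$). Writing $n=2m$ and $n=2m+1$ then turns the claim into two explicit terminating $q$-hypergeometric sums in $j$. Each of these is evaluated by a Hecke-type summation of the kind used to prove \eqref{sigmaofqid}: after reflecting the summation index one recognizes a finite truncation of an indefinite binary theta function, which is summed (for instance via iterated applications of the Rogers--Fine identity, or equivalently a terminating very-well-poised summation used twice) to produce the prefactors $(1-q^{4n})q^{2n^2-2n+1}$ and $-(1-q^{4n+2})q^{2n^2}$ together with the inner sums $\sum q^{-2i^2}$ and $\sum q^{-2i^2-2i}$ over the stated ranges.

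The step I expect to be the main obstacle is exactly this last evaluation: passing from the one-sided terminating sum produced by the inversion formula to a genuinely two-sided (indefinite) theta function, and pinning down the precise prefactors and the exact ranges of the inner summation. An alternative and considerably shorter route, which is the one I would ultimately prefer to write out in full, is to deduce Proposition \ref{paircor1} directly from the general parametrized Bailey pairs of \cite{Lo-Os}: one specializes the free parameter in their Theorems 1.1--1.3, matches the $\beta$-sequences via the Pochhammer splittings above, and reads off the claimed $\alpha$-sequences. Before committing to either argument, one can first verify the statement for small $n$ against the Bringmann--Kane pairs \eqref{a2n}--\eqref{betan} and the Bailey lemma \eqref{alphaprimedef}--\eqref{limitBailey} as a safeguard.
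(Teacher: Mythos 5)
The first thing to note is that the paper does not prove Proposition \ref{paircor1} at all: it explicitly \emph{recalls} these six Bailey pairs from \cite{Lo-Os}, where they arise as specializations of the parametrized families in Theorems 1.1--1.3 of that paper. So your ``alternative and considerably shorter route'' is in fact the paper's route, and had you written it out --- specializing the free parameter, matching the $\beta$-sequences, and reading off the $\alpha$-sequences --- that would have been a complete and faithful proof. As it stands, you describe this specialization but do not perform it, so even the route you say you prefer is left as a plan rather than an argument.

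Your primary route, via the Bailey pair inversion
\[
\alpha_n=(1-aq^{2n})\sum_{j=0}^{n}\frac{(aq)_{n+j-1}}{(q)_{n-j}}(-1)^{n-j}q^{\binom{n-j}{2}}\beta_j,
\]
is set up correctly (the formula is the standard one, the sign combination and the Pochhammer bookkeeping are fine, and the degenerate case $n=0$, $a=1$ is harmless since $b_0=0$). But it contains a genuine gap exactly where you flag one: the passage from the resulting one-sided terminating sum in $j$ to the two-sided indefinite theta functions $\sum_{j=-n}^{n-1}q^{-2j^2}$ and $\sum_{j=-n}^{n}q^{-2j^2-2j}$ with the prefactors $(1-q^{4n})q^{2n^2-2n+1}$ and $-(1-q^{4n+2})q^{2n^2}$. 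This evaluation \emph{is} the content of the proposition; ``a Hecke-type summation of the kind used to prove \eqref{sigmaofqid}'' or ``iterated applications of the Rogers--Fine identity'' names a family of techniques rather than exhibiting the identity that does the job, and it is not evident that a terminating very-well-poised summation applied twice produces these particular quadratic forms and ranges. (For comparison, in \cite{Lo-Os} the general pairs are built up by iterating the Bailey machinery from known seed pairs, not by inverting and evaluating.) Numerical checks for small $n$, which you rightly propose only as a safeguard, do not close this gap. To make the write-up acceptable you should either carry out the specialization of \cite{Lo-Os} explicitly or supply the precise finite summation lemma your first route requires.
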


\begin{proposition} \label{paircor2} We have the following two Bailey pairs. First, the sequences $(a_n,b_n)$ form a Bailey pair relative to $1$, where

\begin{equation} \label{aevenslater2}
a_{2n} = (1-q^{4n})q^{2n^2-2n}\sum_{j=-n}^{n-1}q^{-4j^2-3j},
\end{equation}

\begin{equation} \label{aoddslater2}
a_{2n+1} = -(1-q^{4n+2})q^{2n^2}\sum_{j=-n}^{n}q^{-4j^2-j},
\end{equation}

\noindent and

\begin{equation} \label{bslater2}
b_n = 
\begin{cases}
0, &\text{if $n=0$},\\
\frac{(-1)^nq^{-\binom{n}{2}}}{(q)_{n-1}(1-q^{2n-1})}, &\text{otherwise}.
\end{cases}
\end{equation}

\noindent Second, the sequences $(\alpha_n,\beta_n)$ form a Bailey pair relative to $q$, where

\begin{equation} \label{aevenslater2q}
\alpha_{2n} = \frac{1}{1-q}\left(q^{2n^2}\sum_{j=-n}^{n}q^{-4j^2-j} + q^{2n^2+2n}\sum_{j=-n}^{n-1}q^{-4j^2-3j}\right), 
\end{equation}

\begin{equation} \label{aoddslater2q}
\alpha_{2n+1} = -\frac{1}{1-q}\left(q^{2n^2+2n}\sum_{j=-n-1}^{n}q^{-4j^2-3j} + q^{2n^2+4n+2}\sum_{j=-n}^{n}q^{-4j^2-j}\right),
\end{equation}

\noindent and

\begin{equation} \label{bslater2q}
\beta_n = \frac{(-1)^nq^{-\binom{n+1}{2}}}{(q)_{n}(1-q^{2n+1})}.
\end{equation}
\end{proposition}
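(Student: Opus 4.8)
\emph{Sketch of a proof.}
The plan is to deduce both pairs as specializations of the general families of Bailey pairs established in \cite{Lo-Os} (Theorems 1.1--1.3 there). Those theorems attach, to each admissible choice of a dilation parameter and a linear shift, a Bailey pair relative to $1$ whose $\beta$-sequence has the shape $(-1)^{n}q^{\bullet}/\big((q^{t};q^{t})_{n-1}(1-q^{2n-1})\big)$ and whose $\alpha$-sequence, split according to the parity of the index, is an indefinite binary quadratic form sum $q^{\bullet}\sum_{j}q^{-cj^{2}-ej}$, together with a companion pair relative to $q$ whose $\beta$-sequence has the shape $(-1)^{n}q^{\bullet}/\big((q^{t};q^{t})_{n}(1-q^{2n+1})\big)$. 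I would take $t=1$ with the linear shift that turns $q^{\bullet}$ into $q^{-\binom{n}{2}}$ (respectively $q^{-\binom{n+1}{2}}$): this reproduces $b_{n}$ as in \eqref{bslater2} and $\beta_{n}$ as in \eqref{bslater2q}, and the accompanying $\alpha$-sequences become \eqref{aevenslater2}--\eqref{aoddslater2} and \eqref{aevenslater2q}--\eqref{aoddslater2q} after re-indexing the inner $j$-sums to the stated ranges $-n\le j\le n-1$ and $-n\le j\le n$; the low-order cases $n=0,1$ are checked by hand. This is the same mechanism that yields Proposition \ref{paircor1} (the case $t=2$) and, in \cite{Br-Ka1}, the pair \eqref{a2n}--\eqref{betan}.

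For a self-contained argument, I would instead check the defining relation \eqref{pairdef} directly via Bailey pair inversion. For $(a_{n},b_{n})$, relative to $1$, this amounts to establishing
\[
a_{n}=(1-q^{2n})\sum_{k=0}^{n}\frac{(q)_{n+k-1}\,(-1)^{n-k}q^{\binom{n-k}{2}}}{(q)_{n-k}}\,b_{k},
\]
while for $(\alpha_{n},\beta_{n})$, relative to $q$, one must establish
\[
\alpha_{n}=\frac{1-q^{2n+1}}{1-q}\sum_{k=0}^{n}\frac{(q)_{n+k}\,(-1)^{n-k}q^{\binom{n-k}{2}}}{(q)_{n-k}}\,\beta_{k};
\]
the prefactor $1/(1-q)$ on the right is exactly the one occurring in \eqref{aevenslater2q}--\eqref{aoddslater2q}, which is a first consistency check. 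Substituting the closed forms \eqref{bslater2} and \eqref{bslater2q}, separating the cases $n=2N$ and $n=2N+1$, and cancelling $(1-q^{2k-1})^{-1}$ (respectively $(1-q^{2k+1})^{-1}$) against the Pochhammer symbols would reduce each of the four cases to a single terminating $q$-hypergeometric sum, which must then be identified with the asserted indefinite quadratic form.

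That identification is the main obstacle. The terminating sum produced this way is not summable by any classical formula ($q$-Pfaff--Saalsch\"utz, the very-well-poised ${}_{6}\phi_{5}$, and the like), precisely because its value is an indefinite two-variable theta-type sum rather than a product; supplying it is exactly the Hecke-type phenomenon at the heart of this circle of ideas. In practice one proves such an evaluation by the iterative Bailey-chain and functional-equation technique going back to \cite{adh} and developed in the needed generality in \cite{Lo-Os}, rather than by any single known identity. For this reason the proof I would actually record is the specialization of \cite{Lo-Os} sketched first, with the inversion formulas above serving only as a numerical check for small $n$.
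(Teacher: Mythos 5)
Your primary route—specializing Theorems 1.1--1.3 of \cite{Lo-Os}—is exactly what the paper does: it offers no proof of this proposition, simply recalling these six pairs as already established in \cite{Lo-Os}. Your inversion formulas for the $a=1$ and $a=q$ cases are also stated correctly, and your assessment that they serve better as a numerical check than as a self-contained proof is sound.
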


\begin{proposition} \label{paircor3} We have the following two Bailey pairs. First, the sequences $(a_n,b_n)$ form a Bailey pair relative to $1$, where

\begin{equation} \label{aevenslater3}
a_{2n} = (1-q^{4n})q^{2n^2-2n+1}\sum_{j=-n}^{n-1}q^{-4j^2-j}, 
\end{equation}

\begin{equation} \label{aoddslater3}
a_{2n+1} = -(1-q^{4n+2})q^{2n^2}\sum_{j=-n}^{n}q^{-4j^2-3j},
\end{equation}

\noindent and

\begin{equation} \label{bslater3}
b_n = 
\begin{cases}
0, &\text{if $n=0$},\\
\frac{(-1)^nq^{-\binom{n+1}{2}+1}}{(q)_{n-1}(1-q^{2n-1})}, &\text{otherwise}.
\end{cases}
\end{equation}

\noindent Second, the sequences $(\alpha_n,\beta_n)$ form a Bailey pair relative to $q$, where

\begin{equation} \label{aevenslater3q} 
\alpha_{2n} = \frac{1}{1-q}\left(q^{2n^2}\sum_{j=-n}^{n}q^{-4j^2-3j} + q^{2n^2+2n+1}\sum_{j=-n}^{n-1}q^{-4j^2-j}\right), 
\end{equation}

\begin{equation} \label{aoddslater3q}
\alpha_{2n+1} = -\frac{1}{1-q}\left(q^{2n^2+2n+1}\sum_{j=-n-1}^{n}q^{-4j^2-j} + q^{2n^2+4n+2}\sum_{j=-n}^{n}q^{-4j^2-3j}\right),
\end{equation}

\noindent and

\begin{equation} \label{bslater3q}
\beta_n = \frac{(-1)^nq^{-n(n+3)/2}}{(q)_{n}(1-q^{2n+1})}.
\end{equation}
\end{proposition}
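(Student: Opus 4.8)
The plan is to derive both Bailey pairs as specializations of the general families of Bailey pairs constructed in \cite{Lo-Os} (see Theorems 1.1--1.3 there). Those families attach, to a choice of parameters, a Bailey pair relative to $1$ together with a companion relative to $q$: the $\alpha$-components are $\pm q^{Q(j)}$ summed over a segment of integers, where $Q$ is an indefinite binary quadratic form read off from the parameters, and the $\beta$-components are the corresponding quotients of $q$-Pochhammer symbols. The first step is to identify the parameter values for which the two quadratic forms in play are $-4j^2-j$ and $-4j^2-3j$, i.e.\ exactly the forms occurring in \eqref{aevenslater3}--\eqref{aoddslater3} and \eqref{aevenslater3q}--\eqref{aoddslater3q}. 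These are obtained from the forms in Proposition \ref{paircor2} by interchanging the two forms (with the accompanying shift in the powers of $q$), in the same way that the pair of Proposition \ref{paircor1} arises from interchanging the two forms in the Bringmann--Kane pair relative to $1$ given by \eqref{a2n}--\eqref{bn}.

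The remaining work is bookkeeping. For the pair relative to $q$, one substitutes the chosen parameters into the general $\alpha_n$ and $\beta_n$ of \cite{Lo-Os}, completes the square in the summation variable $j$ in each partial theta sum (producing the pure quadratic term $-4j^2$ together with a linear term and a constant), and then re-indexes $j \mapsto j+c$ or $j \mapsto -j+c$ to absorb the linear term and bring the range of summation to one of the symmetric shapes $-n\le j\le n$ or $-n\le j\le n-1$, tracking the resulting shift in the power of $q$; collecting the Pochhammer factors yields $\beta_n = (-1)^n q^{-n(n+3)/2}/((q)_n(1-q^{2n+1}))$ as in \eqref{bslater3q}, with the cases $n=0,1$ checked directly (note $\beta_0 = 1/(1-q) = \alpha_0$, consistent with \eqref{pairdef}). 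For the pair relative to $1$ one can either run the same computation with the relative-to-$1$ family of \cite{Lo-Os}, or deduce it from the pair relative to $q$: comparison of \eqref{bslater3} and \eqref{bslater3q} shows $b_n = -\beta_{n-1}$ for $n\ge 1$ and $b_0 = 0$, and passing this relation through Bailey-pair inversion relative to $1$ reproduces \eqref{aevenslater3}--\eqref{aoddslater3} after the same completing-the-square-and-reindexing steps (the boundary value $b_0=0=a_0$ is consistent, since the sum in \eqref{aevenslater3} is empty at $n=0$). As a self-contained alternative that avoids \cite{Lo-Os}, one can instead verify the defining relation \eqref{pairdef} head-on: inserting \eqref{aevenslater3}--\eqref{aoddslater3} (resp.\ \eqref{aevenslater3q}--\eqref{aoddslater3q}) into the right-hand side of \eqref{pairdef}, interchanging the order of the Bailey summation and the $j$-summation, and evaluating the inner terminating series by a standard summation of $q$-Chu--Vandermonde type, collapses the double sum to \eqref{bslater3} (resp.\ \eqref{bslater3q}).

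The step I expect to be the main obstacle is exactly this exponent bookkeeping. After completing the square and re-indexing, the linear and constant terms of the exponent must match precisely, and because the Bailey index is split into even and odd parts the manipulation has to be carried out consistently in two parallel computations, where a sign error, or an off-by-one in the shift $c$ or in the summation bounds, is easy to commit. A secondary point requiring care is the apparent pole of the closed forms for $b_n$ and of the partial theta sums at $n=0$ (and the behaviour for small $n$ generally), which must be reconciled by hand with the stated conventions $b_0=0$ and with the $n=0$ instance of \eqref{pairdef}.
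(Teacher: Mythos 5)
Your primary route---obtaining both pairs by specializing the general Bailey-pair families of Theorems 1.1--1.3 in \cite{Lo-Os}---is exactly what the paper does: Proposition \ref{paircor3} is stated without proof and justified solely by the remark that these six pairs ``were established in \cite{Lo-Os}.'' Your boundary checks ($a_0=b_0=0$, $\alpha_0=\beta_0=1/(1-q)$) and the observation $b_n=-\beta_{n-1}$ for $n\ge 1$ are correct, so the proposal is sound and essentially the same approach.
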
 

Finally, we record a useful lemma for rewriting Hecke-type sums in terms of rings of integers of real quadratic fields.
\begin{lemma} \cite[Lemma 3]{adh} \label{adhlemma}
Let $(x_1, y_1)$ be the fundamental solution of $x^2 - Dy^2 = 1$, i.e., the solution in which $x_1$ and $y_1$ are minimal positive. 
If $m>0$, then each equivalence class of solutions of $u^2 - Dv^2 = m$ contains a unique $(u,v)$ with $u>0$ and 
\[-\frac{y_1}{x_1+1}u < v\leq \frac{y_1}{x_1+1}u.\]
If $m<0$, the corresponding conditions are $v>0$ and 
\[-\frac{Dy_1}{x_1+1}v < u \leq \frac{Dy_1}{x_1+1}v.\]
\end{lemma}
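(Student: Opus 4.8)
The plan is to read a solution $(u,v)$ of $u^{2}-Dv^{2}=m$ as the element $\alpha=u+v\sqrt{D}$ of $\mathbb{Z}[\sqrt{D}]$ with norm $N(\alpha):=\alpha\bar\alpha=m$, where $\bar\alpha=u-v\sqrt{D}$, and to recall that two solutions lie in the same equivalence class exactly when the associated elements differ by multiplication by $\pm\varepsilon^{n}$ for some $n\in\mathbb{Z}$, where $\varepsilon:=x_{1}+y_{1}\sqrt{D}>1$ is the fundamental unit of norm $1$ (so $\varepsilon^{-1}=x_{1}-y_{1}\sqrt{D}$). Since $N(\varepsilon^{n}\alpha)=N(\alpha)$ and $\varepsilon>0$, all members of a class have the same sign, so, using the ambient $\pm$, we normalise the representative so that $\alpha>0$.

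Consider first $m>0$. Then $\alpha\bar\alpha=m>0$ forces $\bar\alpha>0$, so every member $\varepsilon^{n}\alpha$ of the normalised class has first coordinate $u=\tfrac12(\varepsilon^{n}\alpha+\overline{\varepsilon^{n}\alpha})>0$; this settles the sign condition (and in particular $u=0$ never occurs here). Put $r:=\bar\alpha/\alpha\in(0,\infty)$; a direct computation gives $v/u=\tfrac{1}{\sqrt{D}}\cdot\tfrac{1-r}{1+r}$, and replacing $\alpha$ by $\varepsilon^{n}\alpha$ replaces $r$ by $\varepsilon^{-2n}r$. The map $r\mapsto\tfrac{1}{\sqrt{D}}\cdot\tfrac{1-r}{1+r}$ is a strictly decreasing bijection from $(0,\infty)$ onto $(-\tfrac{1}{\sqrt{D}},\tfrac{1}{\sqrt{D}})$, and one checks, using $x_{1}^{2}-Dy_{1}^{2}=1$, that it sends $r=\varepsilon^{-1}$ to $\tfrac{y_{1}}{x_{1}+1}$ and $r=\varepsilon$ to $-\tfrac{y_{1}}{x_{1}+1}$. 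Hence the constraint $-\tfrac{y_{1}}{x_{1}+1}u<v\le\tfrac{y_{1}}{x_{1}+1}u$ is equivalent to $r\in[\varepsilon^{-1},\varepsilon)$. Since $[\varepsilon^{-1},\varepsilon)$ is a fundamental domain for the action $r\mapsto\varepsilon^{2}r$ on $\mathbb{R}_{>0}$, the orbit $\{\varepsilon^{-2n}r:n\in\mathbb{Z}\}$ meets it in exactly one point, which gives both existence and uniqueness of the claimed representative.

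The case $m<0$ is handled in the same way. Here $\alpha\bar\alpha=m<0$ forces $\bar\alpha<0$ once $\alpha>0$, so $v=\tfrac{1}{2\sqrt{D}}(\alpha-\bar\alpha)>0$ for every member of the class (and $v=0$ cannot occur), which is the required sign condition. With $r:=\bar\alpha/\alpha\in(-\infty,0)$ one has $u/v=\sqrt{D}\cdot\tfrac{1+r}{1-r}$, a strictly increasing bijection from $(-\infty,0)$ onto $(-\sqrt{D},\sqrt{D})$ sending $r=-\varepsilon^{-1}$ to $\tfrac{Dy_{1}}{x_{1}+1}$ and $r=-\varepsilon$ to $-\tfrac{Dy_{1}}{x_{1}+1}$ (noting $\tfrac{Dy_{1}}{x_{1}+1}<\sqrt{D}$, since $\sqrt{D}\,y_{1}<x_{1}+1$ follows by squaring and using $x_{1}^{2}=1+Dy_{1}^{2}$). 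Thus $-\tfrac{Dy_{1}}{x_{1}+1}v<u\le\tfrac{Dy_{1}}{x_{1}+1}v$ becomes $r\in(-\varepsilon,-\varepsilon^{-1}]$, again a fundamental domain for $r\mapsto\varepsilon^{2}r$ (on $\mathbb{R}_{<0}$), and the conclusion follows as before.

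The bulk of the work, and the only place needing care, is the elementary but fiddly translation of the linear inequalities on $(u,v)$ into the multiplicative intervals for $r$: verifying the endpoint correspondences $r=\varepsilon^{\mp1}\leftrightarrow v/u=\pm\tfrac{y_{1}}{x_{1}+1}$ and $r=-\varepsilon^{\mp1}\leftrightarrow u/v=\pm\tfrac{Dy_{1}}{x_{1}+1}$, getting the monotonicity directions right so that the half-open side of the $(u,v)$-interval matches the half-open side of the $r$-interval, and confirming that the target intervals are nondegenerate. Once that bookkeeping is in place, the ``fundamental domain'' observation makes existence and uniqueness immediate.
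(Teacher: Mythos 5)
Your proof is correct. Note that the paper does not prove this statement at all---it is quoted verbatim from \cite[Lemma 3]{adh}---and your argument (normalizing the sign so $\alpha=u+v\sqrt{D}>0$, tracking $r=\bar\alpha/\alpha$ under the action of the fundamental unit $\varepsilon$, and checking that the stated inequalities cut out exactly the fundamental domain $[\varepsilon^{-1},\varepsilon)$, resp.\ $(-\varepsilon,-\varepsilon^{-1}]$, for multiplication by $\varepsilon^{2}$) is essentially the standard proof given there, with all the endpoint and monotonicity bookkeeping done correctly.
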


\section{Proofs of Theorems \ref{quadthm1}--\ref{quadthm3} and Corollary \ref{realidentities}}

We briefly discuss the strategy for proving Theorems \ref{quadthm1}--\ref{quadthm3}. The first step is to make substitutions for $\rho_1$ and $\rho_2$ such that the product on the right-hand side of (\ref{limitBailey}) either simplifies or is eliminated. For example, for Bailey pairs $(\alpha_{n}$, $\beta_{n})$ relative to $a=1$, we can let $\rho_1 \to \infty$, divide both sides by $1-\rho_2$, then let $\rho_2 \to 1$ in (\ref{limitBailey}) to obtain

\begin{equation} \label{a=1}
\sum_{n \geq 0} (-1)^n (q)_{n-1} q^{\frac{n(n+1)}{2}} \beta_{n} = \sum_{n \geq 1} \frac{(-1)^n q^{\frac{n(n+1)}{2}}}{1-q^n} \alpha_{n}.
\end{equation}

\noindent  Alternatively, we can take $\rho_1=-1$ and divide both sides by $1-\rho_2$, then let $\rho_2 \to 1$ in (\ref{limitBailey}) to get

\begin{equation} \label{a=1also}
\sum_{n \geq 0} (-1)_n (q)_{n-1} (-q)^n \beta_{n} = 2\sum_{n \geq 1} \frac{(-q)^n}{1-q^{2n}} \alpha_n.
\end{equation}

\noindent For Bailey pairs $(\alpha_{n}$, $\beta_{n})$ relative to $a=q$, we can let $b \to \infty$ and $c=q$ in (\ref{limitBailey}) to obtain

\begin{equation} \label{a=q}
\sum_{n \geq 0} (-1)^n (q)_n q^{\frac{n(n+1)}{2}} \beta_{n} = (1-q) \sum_{n \geq 0} (-1)^n q^{\frac{n(n+1)}{2}} \alpha_{n}
\end{equation}

\noindent or take $\rho_1=q$ and $\rho_2=-q$ in (\ref{limitBailey}) to obtain

\begin{equation} \label{a=qalso}
\sum_{n \geq 0} (q^2; q^2)_n (-1)^n \beta_n = \frac{1-q}{2} \sum_{n \geq 0} (-1)^n \alpha_n.
\end{equation}

\noindent We then employ the Bailey pairs in Propositions \ref{paircor1}--\ref{paircor3} and the Bailey lemma in \eqref{alphaprimedef} and \eqref{betaprimedef} to obtain a new Bailey pair. Finally, we insert this new pair into one of (\ref{a=1})--(\ref{a=qalso}), express the ``$\alpha_n$'' side in terms of indefinite quadratic forms and appeal to Lemma \ref{adhlemma}.  We now prove Theorems \ref{quadthm1}--\ref{quadthm3}. 

\begin{proof}[Proof of Theorem \ref{quadthm1}]

To prove (\ref{L3result}), we insert (\ref{aevenslater2})--(\ref{bslater2}) into \eqref{alphaprimedef} and \eqref{betaprimedef} with $(a, \rho_1, \rho_2)=(1, \infty, \infty)$, then apply (\ref{a=1}) to get

\begin{equation} \label{L1heck}
\begin{aligned}
L_{1}(q) = \sum_{\substack{n \geq 1 \\ -n \leq j \leq n-1}} q^{8n^2 - n - 4j^2 - 3j} & + q^{8n^2 + n - 4j^2-3j} \\
& + \sum_{\substack{n \geq 0 \\ -n \leq j \leq n}} q^{8n^2 + 7n + 2 - 4j^2 - j} + q^{8n^2 + 9n + 3 - 4j^2 - j}
\end{aligned}
\end{equation}

\noindent and so

\begin{equation} \label{L3sub}
\begin{aligned}
q^{-17} L_{1}(q^{32}) = \sum_{\substack{n \geq 1 \\ -n \leq j \leq n-1}} q^{(16n-1)^2 - 2(8j+3)^2} & + q^{(16n+1)^2 - 2(8j+3)^2} \\
& + \sum_{\substack{n \geq 0 \\ -n \leq j \leq n}} q^{(16n+7)^2 - 2(8j+1)^2} + q^{(16n+9)^2 - 2(8j+1)^2}.
\end{aligned}
\end{equation}

\noindent Thus, 

\begin{multline} \label{L3sub2}
2q^{-17} L_{1}(q^{32}) = \sum_{\substack{n \geq 1 \\ -n \leq j \leq n-1}} q^{(16n-1)^2 - 2(8j+3)^2}  + q^{(16n+1)^2 - 2(8j+3)^2} \\
 + \sum_{\substack{n \geq 0 \\ -n \leq j \leq n}} q^{(16n+7)^2 - 2(8j+1)^2} + q^{(16n+9)^2 - 2(8j+1)^2} \\
 \hskip1in + \sum_{\substack{n \geq 1 \\ -n+1 \leq j \leq n}} q^{(16n-1)^2 - 2(8j-3)^2}  + q^{(16n+1)^2 - 2(8j-3)^2} \\
 + \sum_{\substack{n \geq 0 \\ -n \leq j \leq n}} q^{(16n+7)^2 - 2(8j-1)^2} + q^{(16n+9)^2 - 2(8j-1)^2} 
\end{multline}

\noindent where we have let $j \to -j$ in the second copy of (\ref{L3sub}) to obtain the third and fourth sum in (\ref{L3sub2}). By Lemma \ref{adhlemma} and unique factorization in $\mathcal{O}_{K}$, each ideal $\frak{a}$ can be uniquely written as $\frak{a} = (u + v \sqrt{2})$ with $u >0$ and $-\frac{1}{2} u < v \leq \frac{1}{2}u$. This representation combined with the condition $N(\frak{a}) \equiv 15 \pmod{32}$ is equivalent to either $u \equiv \pm 1 \pmod{16}$, $v \equiv \pm 3 \pmod{8}$ or $u \equiv \pm 7 \pmod{16}$, $v \equiv \pm 1 \pmod{8}$. Comparing this with (\ref{L3sub2}) implies (\ref{L3result}).

To prove (\ref{L4result}), we insert (\ref{aevenslater2q})--(\ref{bslater2q}) into \eqref{alphaprimedef} and \eqref{betaprimedef} with $(a, \rho_1, \rho_2)=(q, \infty, \infty)$, then apply (\ref{a=q}) to get

\begin{equation} \label{L2heck}
\begin{aligned}
L_{2}(q) = \sum_{\substack{n \geq 0 \\ -n \leq j \leq n}} q^{8n^2 + 3n - 4j^2 - j} & + q^{8n^2 + 13n + 5- 4j^2-j} \\
& + \sum_{\substack{n \geq 0 \\ -n-1 \leq j \leq n}} q^{8n^2 + 11n + 3 - 4j^2 -3j} + q^{8n^2 + 21n + 13 - 4j^2 -3j}
\end{aligned}
\end{equation}

\noindent and so

\begin{equation} \label{L4sub}
\begin{aligned}
q^{7} L_{2}(q^{32}) = \sum_{\substack{n \geq 0 \\ -n \leq j \leq n}} q^{(16n+3)^2 - 2(8j+1)^2} & + q^{(16n+13)^2 - 2(8j+1)^2} \\
& + \sum_{\substack{n \geq 0 \\ -n-1 \leq j \leq n}} q^{(16n+21)^2 - 2(8j+3)^2} + q^{(16n+11)^2 - 2(8j+3)^2}.
\end{aligned}
\end{equation}

\noindent Thus,

\begin{multline} \label{L4sub2}
2q^{7} L_{2}(q^{32}) = \sum_{\substack{n \geq 0 \\ -n \leq j \leq n}} q^{(16n+3)^2 - 2(8j+1)^2}  + q^{(16n+13)^2 - 2(8j+1)^2} \\
 + \sum_{\substack{n \geq 0 \\ -n-1 \leq j \leq n}} q^{(16n+21)^2 - 2(8j+3)^2} + q^{(16n+11)^2 - 2(8j+3)^2} \\
 \hskip1in  + \sum_{\substack{n \geq 0 \\ -n \leq j \leq n}} q^{(16n+3)^2 - 2(8j-1)^2} + q^{(16n+13)^2 - 2(8j-1)^2} \\
 + \sum_{\substack{n \geq 0 \\ -n \leq j \leq n+1}} q^{(16n+21)^2 - 2(8j-3)^2} + q^{(16n+11)^2 - 2(8j-3)^2}.
\end{multline}

\noindent  Again, Lemma \ref{adhlemma}, unique factorization and the condition $N(\frak{a}) \equiv 7 \pmod{32}$ imply (\ref{L4result}) after comparing with (\ref{L4sub2}).

For (\ref{L5result}), we insert (\ref{aevenslater3})--(\ref{bslater3}) into \eqref{alphaprimedef} and \eqref{betaprimedef} with $(a, \rho_1, \rho_2)=(1,\infty, \infty)$, then apply (\ref{a=1}) to obtain

\begin{equation} \label{L3heck}
\begin{aligned}
L_3(q) = \sum_{\substack{n \geq 1 \\ -n \leq j \leq n-1}} q^{8n^2 - n +1 - 4j^2 - j} & + q^{8n^2 + n +1 - 4j^2-j} \\
& + \sum_{\substack{n \geq 0 \\ -n \leq j \leq n}} q^{8n^2 + 7n + 2 - 4j^2 - 3j} + q^{8n^2 + 9n + 3 - 4j^2 -3 j} 
\end{aligned}
\end{equation}

\noindent and so

\begin{equation} \label{L5sub}
\begin{aligned}
q^{-33} L_3(q^{32}) = \sum_{\substack{n \geq 1 \\ -n \leq j \leq n-1}} q^{(16n-1)^2 - 2(8j+1)^2} & + q^{(16n+1)^2 - 2(8j+1)^2} \\
& + \sum_{\substack{n \geq 0 \\ -n \leq j \leq n}} q^{(16n+7)^2 - 2(8j+3)^2} + q^{(16n+9)^2 - 2(8j+3)^2}.
\end{aligned}
\end{equation}

\noindent Thus,

\begin{multline} \label{L5sub2}
2q^{-33} L_3(q^{32}) = \sum_{\substack{n \geq 1 \\ -n \leq j \leq n-1}} q^{(16n-1)^2 - 2(8j+1)^2}  + q^{(16n+1)^2 - 2(8j+1)^2} \\
 + \sum_{\substack{n \geq 0 \\ -n \leq j \leq n}} q^{(16n+7)^2 - 2(8j+3)^2} + q^{(16n+9)^2 - 2(8j+3)^2} \\
 \hskip1in +  \sum_{\substack{n \geq 1 \\ -n+1 \leq j \leq n}} q^{(16n-1)^2 - 2(8j-1)^2} + q^{(16n+1)^2 - 2(8j-1)^2} \\
 + \sum_{\substack{n \geq 0 \\ -n \leq j \leq n}} q^{(16n+7)^2 - 2(8j-3)^2} + q^{(16n+9)^2 - 2(8j-3)^2}. 
\end{multline}

\noindent Arguing as above gives (\ref{L5result}).

For (\ref{L6result}), we insert (\ref{aevenslater3q})--(\ref{bslater3q}) into \eqref{alphaprimedef} and \eqref{betaprimedef} with $(a, \rho_1, \rho_2)=(q,\infty, \infty)$, then apply (\ref{a=q}) to obtain

\begin{equation} \label{L4heck}
\begin{aligned}
L_4(q) = -1 + \sum_{\substack{n \geq 0 \\ -n \leq j \leq n}} q^{8n^2 + 3n - 4j^2 - 3j} & + q^{8n^2 + 13n + 5- 4j^2-3j} \\
& + \sum_{\substack{n \geq 0 \\ -n-1 \leq j \leq n}} q^{8n^2 + 11n + 4 - 4j^2 -j} + q^{8n^2 + 21n + 14 - 4j^2 -j}
\end{aligned}
\end{equation}

\noindent and so

\begin{equation} \label{L6sub}
\begin{aligned}
q^{-9} L_{4}(q^{32}) = -q^{-9} + \sum_{\substack{n \geq 0 \\ -n \leq j \leq n}} q^{(16n+3)^2 - 2(8j+3)^2} & + q^{(16n+13)^2 - 2(8j+3)^2} \\
& + \sum_{\substack{n \geq 0 \\ -n-1 \leq j \leq n}} q^{(16n+21)^2 - 2(8j+1)^2} + q^{(16n+11)^2 - 2(8j+1)^2}.
\end{aligned}
\end{equation}

\noindent Slightly modifying the summation limits we obtain
\begin{equation}
\begin{aligned}
q^{-9}L_4(q^{32}) &= \sum_{\substack{n \geq 0 \\ -n \leq j \leq n-1}} q^{(16n+3)^2 -2(8j+3)^2} + \sum_{\substack{n \geq 0 \\ -n-1 \leq j \leq n}} q^{(16n+13)^2 -2(8j+3)^2} \\
&+ \sum_{\substack{n \geq -1 \\ -n-1 \leq j \leq n+1}} q^{(16n+21)^2 -2(8j+1)^2} + \sum_{\substack{n \geq 0 \\ -n \leq j \leq n}} q^{(16n+11)^2 -2(8j+1)^2}.
\end{aligned}
\end{equation}

\noindent Thus, 

\begin{equation} \label{L6sub2}
\begin{aligned}
2q^{-9} L_{4}(q^{32}) &= \sum_{\substack{n \geq 0 \\ -n \leq j \leq n-1}} q^{(16n+3)^2 -2(8j+3)^2} + \sum_{\substack{n \geq 0 \\ -n-1 \leq j \leq n}} q^{(16n+13)^2 -2(8j+3)^2} \\
&+ \sum_{\substack{n \geq -1 \\ -n-1 \leq j \leq n+1}} q^{(16n+21)^2 -2(8j+1)^2} + \sum_{\substack{n \geq 0 \\ -n \leq j \leq n}} q^{(16n+11)^2 -2(8j+1)^2} \\
&+ \sum_{\substack{n \geq 0 \\ -n+1 \leq j \leq n}} q^{(16n+3)^2 -2(8j-3)^2} + \sum_{\substack{n \geq 0 \\ -n \leq j \leq n+1}} q^{(16n+13)^2 -2(8j-3)^2} \\
&+ \sum_{\substack{n \geq -1 \\ -n-1 \leq j \leq n+1}} q^{(16n+21)^2 -2(8j-1)^2} + \sum_{\substack{n \geq 0 \\ -n \leq j \leq n}} q^{(16n+11)^2 -2(8j-1)^2}.
\end{aligned}
\end{equation}

\noindent Arguing as before gives (\ref{L6result}).
\end{proof}

\begin{proof}[Proof of Theorem \ref{quadthm2}]

For (\ref{L11result}), insert (\ref{aevenslater1})--(\ref{bslater1}) into \eqref{alphaprimedef} and \eqref{betaprimedef} with $(a,\rho_1, \rho_2)=(1, \infty, \infty)$, then apply (\ref{a=1also}) to obtain

\begin{equation} \label{L5heck}
L_{5}(q) = 2\sum_{\substack{n \geq 1 \\ -n \leq j \leq n-1}} q^{6n^2 + 1 - 2j^2} + 2\sum_{\substack{n \geq 0 \\ -n \leq j \leq n}} q^{6n^2 + 6n + 2- 2j^2 - 2j}
\end{equation}

\noindent and so

\begin{equation} \label{L11sub}
q^{-2} L_{5}(q^2) =  2\sum_{\substack{n \geq 1 \\ -n \leq j \leq n-1}} q^{3(2n)^2 - (2j)^2} + 2\sum_{\substack{n \geq 0 \\ -n \leq j \leq n}} q^{3(2n+1)^2 - (2j+1)^2}.
\end{equation}

\noindent By Lemma \ref{adhlemma} and unique factorization in $\mathcal{O}_{L}$, each (principal) ideal $\frak{a}$ generated by an element of negative norm can be uniquely written as $\frak{a}= (u + v \sqrt{3})$ with $v >0$ and $-v < u \leq v$. This representation combined with the condition $N(\frak{a}) \equiv 0 \pmod{2}$ is equivalent to either $u \equiv 0 \pmod{2}$, $v \equiv 0 \pmod{2}$ or $u \equiv 1 \pmod{2}$, $v \equiv 1 \pmod{2}$.  Comparing this with (\ref{L11sub}) implies (\ref{L11result}).

For (\ref{L15result}), insert (\ref{a2n})--(\ref{bn}) into \eqref{alphaprimedef} and \eqref{betaprimedef} with $(a,\rho_1, \rho_2)=(q, \infty, \infty)$, then apply (\ref{a=1also}) to get

\begin{equation} \label{L6heck}
L_{6}(q) = 2\sum_{\substack{n \geq 1 \\ -n \leq j \leq n-1}} q^{6n^2 - 2j^2 - 2j} + 2\sum_{\substack{n \geq 0 \\ -n \leq j \leq n}} q^{6n^2 + 6n + 1 - 2j^2}
\end{equation}

\noindent and so

\begin{equation} \label{L15sub}
q^{-1} L_{6}(q^2) =  2\sum_{\substack{n \geq 1 \\ -n \leq j \leq n-1}} q^{3(2n)^2 - (2j+1)^2} + 2\sum_{\substack{n \geq 0 \\ -n \leq j \leq n}} q^{3(2n+1)^2 - (2j)^2}.
\end{equation}

\noindent By Lemma \ref{adhlemma} and unique factorization in $\mathcal{O}_{L}$, each principal ideal $\frak{a}$ generated by an element of negative norm can be uniquely written as 
$\frak{a}= (u + v \sqrt{3})$ with $v > 0$, $-v < u \leq v$. Arguing as usual gives (\ref{L15result}).

For (\ref{L16result}), insert (\ref{alpha2n})--(\ref{betan}) into \eqref{alphaprimedef} and \eqref{betaprimedef} with $(a,\rho_1, \rho_2)=(q, \infty, \infty)$, then apply (\ref{a=qalso}) to get

\begin{equation} \label{L7heck}
\begin{aligned}
L_{7}(q) =  \sum_{\substack{n \geq 0 \\ -n-1 \leq j \leq n}} q^{6n^2 + 16n + 10 - 2j^2 - 2j} & + q^{6n^2 + 8n +2 - 2j^2 - 2j} \\
& + \sum_{\substack{n \geq 0 \\ -n \leq j \leq n}} q^{6n^2 + 2n - 2j^2} + q^{6n^2 + 10n + 4 - 2j^2}
\end{aligned}
\end{equation}

\noindent and so 

\begin{equation} \label{L16sub}
\begin{aligned}
q L_{7}(q^6) = \sum_{\substack{n \geq 0 \\ -n-1 \leq j \leq n}} q^{(6n+8)^2 - 3(2j+1)^2} & + q^{(6n+4)^2 - 3(2j+1)^2} \\
& + \sum_{\substack{n \geq 0 \\ -n \leq j \leq n}} q^{(6n+1)^2 - 3(2j)^2} + q^{(6n+5)^2 - 3(2j)^2}.
\end{aligned}
\end{equation}

\noindent Arguing as usual gives (\ref{L16result}).

For (\ref{L17result}), insert (\ref{aevenslater1q})--(\ref{bslater1q}) into \eqref{alphaprimedef} and \eqref{betaprimedef} with $(a,\rho_1, \rho_2)=(q,\infty, \infty)$, then apply (\ref{a=qalso}) to obtain

\begin{equation} \label{L8heck}
\begin{aligned}
L_{8}(q) =  -1 + \sum_{\substack{n \geq 0 \\ -n \leq j \leq n}} q^{6n^2 + 2n - 2j^2 - 2j} & + q^{6n^2 + 10n + 4 - 2j^2 -2j} \\
& + \sum_{\substack{n \geq 0 \\ -n-1 \leq j \leq n}} q^{6n^2 + 16n + 11 - 2j^2} + q^{6n^2 + 8n + 3 -2j^2}
\end{aligned}
\end{equation}

\noindent and so

\begin{equation*} \label{L17sub}
\begin{aligned}
q^{-2}L_{8}(q^6) =  q^{-2} + \sum_{\substack{n \geq 0 \\ -n \leq j \leq n}} q^{(6n+1)^2 - 3(2j+1)^2} & + q^{(6n+5)^2 - 3(2j+1)^2} \\
& + \sum_{\substack{n \geq 0 \\ -n-1 \leq j \leq n}} q^{(6n+8)^2 - 3(2j)^2} + q^{(6n+4)^2 - 3(2j)^2}.
\end{aligned}
\end{equation*}

\noindent Slightly modifying the summation bounds and simplifying gives
\begin{equation*}
\begin{aligned}
q^{-2}L_8(q^6) &= \sum_{\substack{n \geq 0 \\ -n \leq j \leq n-1}} q^{(6n+1)^2 - 3(2j+1)^2} + \sum_{\substack{n \geq 0 \\ -n-1 \leq j \leq n}} q^{(6n+5)^2 - 3(2j+1)^2}  \\ 
&+ \sum_{\substack{n \geq -1 \\ -n-1 \leq j \leq n+1}} q^{(6n+8)^2 - 3(2j)^2} + \sum_{\substack{n \geq 0 \\ -n \leq j \leq n}} q^{(6n+4)^2 - 3(2j)^2}.
\end{aligned}
\end{equation*}
Arguing as usual gives (\ref{L17result}). 
\end{proof}

\begin{proof}[Proof of Theorem \ref{quadthm3}]

For (\ref{L12result}), we insert (\ref{aevenslater2})--(\ref{bslater2}) into \eqref{alphaprimedef} and \eqref{betaprimedef} with $(a,\rho_1, \rho_2)=(1,\infty, \infty)$, then apply (\ref{a=1also}) to get

\begin{equation} \label{L9heck}
L_{9}(q) = 2\sum_{\substack{n \geq 1 \\ -n \leq j \leq n-1}} q^{6n^2 - 4j^2 -3j} + 2\sum_{\substack{n \geq 0 \\ -n \leq j \leq n}} q^{6n^2 + 6n + 2- 4j^2 - j}
\end{equation}

\noindent and so

\begin{equation} \label{L12sub}
\begin{aligned}
q^{-9} L_{9}(q^{16}) &=  \sum_{\substack{n \geq 1 \\ -n \leq j \leq n-1}} q^{6(4n)^2 - (8j+3)^2} + \sum_{\substack{n \geq 0 \\ -n \leq j \leq n}} q^{6(4n+2)^2 - (8j+1)^2} \\
&+ \sum_{\substack{n \geq 1 \\ -n+1 \leq j \leq n}} q^{6(4n)^2 - (8j-3)^2} + \sum_{\substack{n \geq 0 \\ -n \leq j \leq n}} q^{6(4n+2)^2 - (8j-1)^2}. 
\end{aligned}
\end{equation}

By Lemma \ref{adhlemma} and unique factorization in $\mathcal{O}_{M}$, each ideal $\frak{a}$ with $N(\frak{a}) \equiv 7 \pmod{16}$ can be uniquely written as $\frak{a}= (u + v \sqrt{6})$ with $v >0$ and $-2v < u \leq 2v$. Arguing as usual gives (\ref{L12result}).

For (\ref{L13result}), we insert (\ref{aevenslater3})--(\ref{bslater3}) into \eqref{alphaprimedef} and \eqref{betaprimedef} with $(a,\rho_1, \rho_2)=(1,\infty, \infty)$, then apply (\ref{a=1also}) to get

\begin{equation} \label{L10heck}
L_{10}(q) = 2\sum_{\substack{n \geq 1 \\ -n \leq j \leq n-1}} q^{6n^2 + 1- 4j^2 -j} + 2\sum_{\substack{n \geq 0 \\ -n \leq j \leq n}} q^{6n^2 + 6n + 2- 4j^2 -3 j}
\end{equation}

\noindent and so

\begin{equation} \label{L13sub}
\begin{aligned}
q^{-17} L_{10}(q^{16}) &=  \sum_{\substack{n \geq 1 \\ -n \leq j \leq n-1}} q^{6(4n)^2 - (8j+1)^2} + \sum_{\substack{n \geq 0 \\ -n \leq j \leq n}} q^{6(4n+2)^2 - (8j+3)^2} \\ 
&+ \sum_{\substack{n \geq 1 \\ -n+1 \leq j \leq n}} q^{6(4n)^2 - (8j-1)^2} + \sum_{\substack{n \geq 0 \\ -n \leq j \leq n}} q^{6(4n+2)^2 - (8j-3)^2}.
\end{aligned}
\end{equation}

\noindent Arguing as usual gives (\ref{L13result}).

For (\ref{L18result}), we insert (\ref{aevenslater2q})--(\ref{bslater2q}) into \eqref{alphaprimedef} and \eqref{betaprimedef} with $(a,\rho_1, \rho_2)=(q,\infty, \infty)$, then apply (\ref{a=qalso}) to obtain

\begin{equation} \label{L11heck}
\begin{aligned}
L_{11}(q) = \sum_{\substack{n \geq 0 \\ -n \leq j \leq n}} q^{6n^2 + 2n - 4j^2 -j} & + q^{6n^2 + 10n + 4 - 4j^2 - j} \\
& + \sum_{\substack{n \geq 0 \\ -n-1 \leq j \leq n}} q^{6n^2 + 16n + 10 - 4j^2 - 3j} + q^{6n^2 + 8n + 2 - 4j^2 - 3j}
\end{aligned}
\end{equation}

\noindent and so

\begin{equation} \label{L18sub}
\begin{aligned}
q^{10} L_{11}(q^{96}) =  \sum_{\substack{n \geq 0 \\ -n \leq j \leq n}} q^{(24n+4)^2 - 6(8j+1)^2} & + q^{(24n + 20)^2 - 6(8j+1)^2} \\
& + \sum_{\substack{n \geq 0 \\ -n-1 \leq j \leq n}} q^{(24n+32)^2 - 6(8j+3)^2} + q^{(24n+16)^2 - 6(8j+3)^2}.
\end{aligned}
\end{equation}

\noindent Thus, 

\begin{multline} \label{L18sub2}
2q^{10} L_{11}(q^{96}) =  \sum_{\substack{n \geq 0 \\ -n \leq j \leq n}} q^{(24n+4)^2 - 6(8j+1)^2}  + q^{(24n + 20)^2 - 6(8j+1)^2} \\
 + \sum_{\substack{n \geq 0 \\ -n-1 \leq j \leq n}} q^{(24n+32)^2 - 6(8j+3)^2} + q^{(24n+16)^2 - 6(8j+3)^2} \\
 \hskip1in +  \sum_{\substack{n \geq 0 \\ -n \leq j \leq n}} q^{(24n+4)^2 - 6(8j-1)^2} + q^{(24n + 20)^2 - 6(8j-1)^2} \\
 + \sum_{\substack{n \geq 0 \\ -n \leq j \leq n+1}} q^{(24n+32)^2 - 6(8j-3)^2} + q^{(24n+16)^2 - 6(8j-3)^2}.
\end{multline} 

\noindent Arguing as usual gives 
\begin{equation} 
q^{10} L_{11}(q^{96}) = \frac{1}{2} \sum_{\substack{\frak{a} \subset \mathcal{O}_{M} \\ N(\frak{a}) \equiv 10 \pmod{96}}} q^{N(\frak{a})},
\end{equation} 
and dividing by the unique ideal $(2+\sqrt{6})$ in $\mathcal{O}_M$ of norm $2$ gives (\ref{L18result}). 

For (\ref{newL12result}), we insert (\ref{aevenslater3q})--(\ref{bslater3q}) into \eqref{alphaprimedef} and \eqref{betaprimedef} with $(a,\rho_1, \rho_2)=(q,\infty, \infty)$, then apply (\ref{a=qalso}) to obtain

\begin{equation} \label{L12heck}
\begin{aligned}
L_{12}(q) = -2 + \sum_{\substack{n \geq 0 \\ -n \leq j \leq n}} q^{6n^2 + 2n - 4j^2 - 3j} & + q^{6n^2 + 10n + 4 - 4j^2 - 3j} \\
& + \sum_{\substack{n \geq 0 \\ -n-1 \leq j \leq n}} q^{6n^2 + 16n + 11 - 4j^2 - j} + q^{6n^2 + 8n + 3 - 4j^2 - j}.
\end{aligned}
\end{equation}

\noindent Thus,

\begin{equation*}
\begin{aligned}
q^{-38} L_{12}(q^{96}) = -2q^{-38} & + \sum_{\substack{n \geq 0 \\ -n \leq j \leq n}} q^{(24n+4)^2 - 6(8j+3)^2} + q^{(24n + 20)^2 - 6(8j+3)^2} \\
& + \sum_{\substack{n \geq 0 \\ -n-1 \leq j \leq n}} q^{(24n+32)^2 - 6(8j+1)^2} + q^{(24n+16)^2 - 6(8j+1)^2}.
\end{aligned}
\end{equation*}

\noindent Slightly modifying the summation bounds and simplifying gives

\begin{equation}
\begin{aligned}
q^{-38} L_{12}(q^{96}) & = \sum_{\substack{n \geq 1 \\ -n \leq j \leq n-1}} q^{(24n+4)^2 - 6(8j+3)^2} +  \sum_{\substack{n \geq 0 \\ -n-1 \leq j \leq n}} q^{(24n + 20)^2 - 6(8j+3)^2} \\
& +  \sum_{\substack{n \geq -1 \\ -n-1 \leq j \leq n+1}} q^{(24n+32)^2 - 6(8j+1)^2} + \sum_{\substack{n \geq 0 \\ -n \leq j \leq n}} q^{(24n+16)^2 - 6(8j+1)^2}.
\end{aligned}
\end{equation}

\noindent Thus, 

\begin{equation}
\begin{aligned}
2q^{-38} L_{12}(q^{96}) & =  \sum_{\substack{n \geq 1 \\ -n \leq j \leq n-1}} q^{(24n+4)^2 - 6(8j+3)^2} +  \sum_{\substack{n \geq 0 \\ -n-1 \leq j \leq n}} q^{(24n + 20)^2 - 6(8j+3)^2} \\
& + \sum_{\substack{n \geq 1 \\ -n+1 \leq j \leq n}} q^{(24n+4)^2 - 6(8j-3)^2} +  \sum_{\substack{n \geq 0 \\ -n \leq j \leq n+1}} q^{(24n + 20)^2 - 6(8j-3)^2} \\
& +  \sum_{\substack{n \geq -1 \\ -n-1 \leq j \leq n+1}} q^{(24n+32)^2 - 6(8j+1)^2} + \sum_{\substack{n \geq 0 \\ -n \leq j \leq n}} q^{(24n+16)^2 - 6(8j+1)^2} \\
& +  \sum_{\substack{n \geq -1 \\ -n-1 \leq j \leq n+1}} q^{(24n+32)^2 - 6(8j-1)^2} + \sum_{\substack{n \geq 0 \\ -n \leq j \leq n}} q^{(24n+16)^2 - 6(8j-1)^2}.
\end{aligned}
\end{equation}

\noindent Arguing as usual gives (\ref{newL12result}). 
\end{proof}

\begin{proof}[Proof of Corollary \ref{realidentities}]
By Theorem 3.3 of \cite{cflz} we have that 

\begin{equation} \label{realid1}
q^{-1} \mathcal{Z}_2(q^8) = \frac{1}{2} \sum_{\substack{\frak{a} \subset \mathcal{O}_{K} \\ N(\frak{a}) \equiv 7 \pmod{8}}} q^{N(\frak{a})}.
\end{equation}

Comparing (\ref{realid1}) with equations (\ref{L3result})--(\ref{L6result}) gives \eqref{realidentity1}. Next, Theorem 1.2 of \cite{reallove} is equivalent to 

\begin{equation} \label{realid2}
\mathcal{Z}_3(q)  = -\sum_{\substack{\frak{a} \subset \mathcal{O}_{L} \\ \frak{a} = (x), N(x) < 0}} (-1)^{N(\frak{a})}q^{N(\frak{a})}.
\end{equation}

\noindent One compares (\ref{realid2}) with equations (\ref{L11result}) and (\ref{L15result}) to obtain \eqref{realidentity2}.  In Theorem 1.7 of \cite{Br-Ka1} it is shown that

\begin{equation} \label{realid3}
q\mathcal{Z}_4(q^3) = -\sum_{\substack{\frak{a} \subset \mathcal{O}_{L} \\ N(\frak{a}) \equiv 1 \pmod{3}}} (-1)^{N(\frak{a})} q^{N(\frak{a})}.
\end{equation}

\noindent Comparing (\ref{realid3}) with equations \eqref{L16result} and \eqref{L17result} gives \eqref{realidentity3}. Finally, in Theorem 1.6 of \cite{Br-Ka1} it is shown that
\begin{equation} \label{realid4}
q^{-1}\mathcal{Z}_5(q^4) = -\sum_{\substack{\frak{a} \subset \mathcal{O}_{L} \\ N(\frak{a}) \equiv 3 \pmod{4}}} q^{N(\frak{a})}.  
\end{equation}
The sum on the right-hand side of \eqref{realid4} is identical to the sum on the right-hand side of \eqref{L15result}, giving \eqref{realidentity4}.
\end{proof}

\section{Questions for further study}
The series $\sigma(q)$ has been related to Maass waveforms by Cohen \cite{Co1} and to quantum modular forms by Zagier \cite{Zag2}.  The relation of the series $L_i(q)$ to Maass waveforms could be made precise using work of Zwegers \cite{zwegers}, but it is unclear whether there is a relation to quantum modular forms.   This is worth investigating.  The combinatorics of these series is also worth pursuing.  Do they have an elegant partition-theoretic interpretation?  Is there a natural explanation for the positivity of their coefficients?

\section*{Acknowledgements} The second author would like to thank the Institut des Hautes {\'E}tudes Scientifiques for their support during the completion of this paper. This material is based upon work supported by the National Science Foundation under Grant No. 1002477.


\begin{thebibliography}{99}

\bibitem{An1}
G.E. Andrews, \emph{Multiple series Rogers-Ramanujan identities}, Pacific  J. Math. {\bf 114} (1984), 267--283.

\bibitem{An2} 
G.E. Andrews, \emph{$q$-Series: Their Development and Application in Analysis, Number Theory, Combinatorics, Physics, and Computer Algebra}, volume 66 of Regional Conference Series in Mathematics. American Mathematical Society, Providence, RI, 1986.

\bibitem{An3}
G.E. Andrews, \emph{The fifth and seventh order mock theta functions}, Trans. Amer. Math. Soc. {\bf 293} (1986), 113--134.

\bibitem{adh}
G.E. Andrews, F. Dyson and D. Hickerson, \emph{Partitions and indefinite quadratic forms}, Invent. Math. \textbf{91} (1988), no. 3, 391--407.

\bibitem{AH}
G.E. Andrews and D. Hickerson, \emph{Ramanujan's ``lost" notebook. VII. The sixth order mock theta functions}, Adv. Math. {\bf 89} (1991), no. 1, 60--105.

\bibitem{Br-Ka1}
K. Bringmann and B. Kane, \emph{Multiplicative $q$-hypergeometric series arising from real quadratic fields}, Trans. Amer. Math. Soc. {\bf 
363} (2011), no. 4, 2191--2209.

\bibitem{Co1}
H. Cohen, \emph{$q$-identities for Maass waveforms}, Invent. Math. {\bf 91} (1988), 409--422.

\bibitem{cflz}
D. Corson, D. Favero, K. Liesinger and S. Zubairy, \emph{Characters and $q$-series in $\mathbb{Q}(\sqrt{2})$}, J. Number Theory \textbf{107} (2004), 392--405.

\bibitem{dyson}
F. Dyson, \emph{A walk through Ramanujan's garden}, Ramanujan revisited (Urbana-Champaign, Ill., 1987), 7--28, Academic Press, Boston, MA, 1988.

\bibitem{reallove}
J. Lovejoy, \emph{Overpartitions and real quadratic fields}, J. Number Theory \textbf{106} (2004), 178--184.

\bibitem{Lo1}
J. Lovejoy, \emph{Bailey pairs and indefinite quadratic forms}, J. Math. Anal. Appl. {\bf 410} (2014), 1002--1013.

\bibitem{lovemall}
J. Lovejoy and O. Mallet, \emph{$n$-color overpartitions, twisted divisor functions and Rogers-Ramanujan identities}, South East Asian J. Math. Math. Sci. \textbf{6} (2008), 23--36.

\bibitem{Lo-Os}
J. Lovejoy and R. Osburn, \emph{Mock theta double sums}, preprint available at \url{http://arxiv.org/abs/1502.00387}

\bibitem{odoni}
R. Odoni, \emph{On norms of integers in a full module of an algebraic number field and the distribution of values of binary integral quadratic forms}, Mathematika \textbf{22} (1975), no. 2, 108--111.

\bibitem{war}
S. O. Warnaar, \emph{50 years of Bailey's lemma}, Algebraic combinatorics and applications (G{\"o}{\ss}weinstein, 1999), 333--347, Springer, Berlin, 2001.

\bibitem{Zag2} D. Zagier, Quantum modular forms, Quanta of Maths: Conference in honor of Alain Connes. Clay Mathematics Proceedings 11. AMS and Clay Mathematics Institute 2010, 659--675.

\bibitem{zwegers}
S. Zwegers, \emph{Mock Maass theta functions}, Q. J. Math. \textbf{63} (2012), no. 3, 753--770.

\end{thebibliography}
\end{document}